\documentclass[12pt]{amsart}
\usepackage{ulem}
\usepackage{framed}
\usepackage{mathrsfs}
\usepackage[pagewise]{lineno}

\usepackage{amsthm}
\usepackage{amssymb}
\usepackage{amscd}
\usepackage{amsmath}
\usepackage[all]{xy}
\usepackage{comment}
\usepackage{algorithmic,algorithm}
\usepackage{here}
\usepackage{graphicx}
\usepackage{color} %
\usepackage{enumerate} %
\usepackage{listliketab}
\usepackage{multirow}
\usepackage{enumitem}
\makeatletter
\c@MaxMatrixCols=14
\makeatother

\usepackage[T1]{fontenc}

\makeatletter
  
  \@addtoreset{algorithm}{subsection}
\makeatother


\def\Frob{{\rm Frob}}

\def\dim{{\rm dim}}

\newcommand{\GL}{\operatorname{GL}}

\newcommand{\PGL}{\operatorname{PGL}}

\newcommand{\Hom}{\operatorname{Hom}}

\newcommand{\Aut}{\operatorname{Aut}}

\newcommand{\diag}{\operatorname{diag}}

\newtheorem{theorem}{Theorem}[subsection]
\newtheorem{cor}[theorem]{Corollary}
\newtheorem{lem}[theorem]{Lemma}
\newtheorem{prop}[theorem]{Proposition}


\newtheorem{theor}{Theorem}


\theoremstyle{definition}

\newtheorem{rem}[theorem]{Remark}

\newtheorem*{expectt}{Expectation}
\newtheorem{remm}{Remark}[section]




\frenchspacing

\textwidth=13.5cm
\textheight=23cm
\parindent=16pt
\oddsidemargin=1.5cm
\evensidemargin=1.5cm
\topmargin=-0.5cm



\makeatletter
\def\@seccntformat#1{\csname the#1\endcsname. }
\long\def\@makefntext#1{\parindent 1em\noindent 
\@hangfrom{\hbox to 1.8em{\hss $^{\@thefnmark}$}}#1}
\def\legendre@dash#1#2{\hb@xt@#1{%
  \kern-#2\p@
  \cleaders\hbox{\kern.5\p@
    \vrule\@height.2\p@\@depth.2\p@\@width\p@
    \kern.5\p@}\hfil
  \kern-#2\p@
  }}
\def\@legendre#1#2#3#4#5{\mathopen{}\left(
  \sbox\z@{$\genfrac{}{}{0pt}{#1}{#3#4}{#3#5}$}%
  \dimen@=\wd\z@
  \kern-\p@\vcenter{\box0}\kern-\dimen@\vcenter{\legendre@dash\dimen@{#2}}\kern-\p@
  \right)\mathclose{}}
\newcommand\legendre[2]{\mathchoice
  {\@legendre{0}{1}{}{#1}{#2}}
  {\@legendre{1}{.5}{\vphantom{1}}{#1}{#2}}
  {\@legendre{2}{0}{\vphantom{1}}{#1}{#2}}
  {\@legendre{3}{0}{\vphantom{1}}{#1}{#2}}
}
\def\dlegendre{\@legendre{0}{1}{}}
\def\tlegendre{\@legendre{1}{0.5}{\vphantom{1}}}
\renewcommand\section{\@startsection {section}{1}{\z@}%
 {-3.5ex \@plus -1ex \@minus -.2ex}%
 {2.3ex \@plus.2ex}%
 {\normalfont\large\bfseries}}
\makeatother

\newcommand{\Cyc}{{\rm Z}}



\begin{document}
\baselineskip=17pt

\title[The $a$-numbers of non-hyperelliptic curves of genus 3]{The $a$-numbers of non-hyperelliptic curves of genus 3 with cyclic automorphism group of order 6}

\author[R. Ohashi]{Ryo Ohashi}
\address{Graduate School of Information Science and Technology\\
The University of Tokyo}
\email{ryo-ohashi@g.ecc.u-tokyo.ac.jp}

\author[M. Kudo]{Momonari Kudo}
\address{Department of Information and Communication\\
Faculty of Information Engineering\\
Fukuoka Institute of Technology\\
Wajiro-higashi 3-30-1, Higashi-ku, 811-0295 Fukuoka, Japan}
\email{m-kudo@fit.ac.jp}

\author[S. Harashita]{Shushi Harashita}
\address{Graduate School of Environment and Information Sciences\\
Yokohama National University}
\email{harasita@ynu.ac.jp}



\date{}

\subjclass[2020]{Primary 14H45, 14H50; Secondary 33C05}

\keywords{Algebraic curves, Genus-3 curves, Canonical curves, Cartier-Manin matrix, Hypergeometric series}

\begin{abstract}
In this paper, we study non-hyperelliptic curves of genus $3$ with cyclic automorphism group of order $6$.
{Over an algebraically closed field $K$ of characteristic $\neq 2,3$,} such curves are written as plane quartics $C_r: x^3 z + y^4 + r y^2 z^2 + z^4 = 0$ with one parameter $r$.
As the first main theorem, {we show that $r\neq 0,\pm 2$ and} give a necessary and sufficient condition with respect to $r$ and $r'$ such that $C_r \cong C_{r'}$.
By describing the Hasse-Witt matrix of $C_r$ in terms of a certain Gauss' hypergeometric series,
we obtain the second main theorem, where we determine the possible $a$-number of $C_r$, and give the exact number of isomorphism classes over $K$ of such curves attaining the possible maximal $a$-number.
\end{abstract}

\maketitle

\numberwithin{equation}{section}
\section{Introduction}\label{sec:Intro0}
Let $K$ be an algebraically closed field of positive characteristic $p$.
Throughout this paper, by a curve we mean a non-singular projective variety of dimension one.
A curve $C$ of genus $g$ over $K$ is said to be {\it superspecial} if $J(C) \cong E^g$ for a supersingular elliptic curve $E$, where $J(C)$ denotes the Jacobian variety of $C$.
There are two well-known equivalent conditions for $C$ to be superspecial:
One is that the Frobenius $\Frob_C^{\ast}$ on the first cohomology group $H^1(C,\mathcal{O}_C)$ acts as zero~\cite[Theorem 4.1]{Nygaard}, where $\Frob_C$ is the absolute Frobenius morphism on $C$, and the other is
that the $a$-number $a(C)$ is equal to the genus $g$,
where the $a$-number of $C$ is defined by $a(C):= \dim_K \Hom(\alpha_p, J(C))$ with the group scheme $\alpha_p$ obtained as the kernel of the Frobenius on the additive group ${\mathbb G}_a$.
This equivalence can be generalized as follows:
A matrix $H$ representing $\Frob_C^{\ast}$ with respect to a suitable basis for $H^1(C,\mathcal{O}_C)$ is called the {\it Hasse-Witt matrix} of $C$, and one has
$a(C) = g - \mathrm{rank}(H)$ (see e.g., \cite[\S\;2.4]{H_RIMS20} for an exposition of this fact).
Here, the $a$-number is used for the stratification on the space of curves of given genus $g$.
For this context, determining the $a$-number of a given curve is a very important problem in the study of algebraic curves and their moduli spaces.

This paper focuses on the case where $C$ is of genus $g=3$ and non-hyperelliptic curve. We here recall that Lercier-Ritzenthaler-Rovetta-Sijsling \cite[Theorem 3.1]{LRRS} classified such curves $C$ over $K$ of characteristic $p \geq 5$ into $13$ types by structures of the automorphism group of $C$ (we write ${\rm Aut}(C)$ for this group) as finite groups, and they also gave an explicit defining equation of $C$ for each type.
In the following, we denote by $\Cyc_n$ (resp.\ ${\rm D}_{2n}$) the cyclic\ (resp.\ dihedral) group of order $n$ (resp.\ $2n$) and denote by ${\rm S}_n$ the symmetric group of degree $n$.
Among these 13 types, the families of dimension $1$ are the following three types:
${\rm Aut}(C) \cong \Cyc_6,\,\Cyc_4 \times {\rm D}_4$ or ${\rm S}_4$.
Of these, it is known (cf.\ \cite[Sections 4-5]{Top}) that the Jacobian varieties of the latter two curves are $(2,2,2)$-isogenous to the product of three elliptic curves $E_i$, and hence the $a$-number of $C$ is completely described in terms of those of $E_i$ (therefore we do not treat the two cases in this paper).

From the above reason, in this paper we study non-hyperelliptic curves $C$ of genus $3$ with cyclic automorphism group $\Cyc_6$.
According to \cite[Theorem 3.1]{LRRS}, an explicit defining equation of such a curve is given as
\[
C_r: x^3 z + y^4 + r y^2 z^2 + z^4 = 0 \quad {\rm for}\ r \in K,
\]
and $r^2$ is a complete invariant of the curve (over any algebraically closed field), as shown in the proof of \cite[Theorem 3.3]{LRRS}. In Section \ref{Cr} of this paper,
we study such a description of these curves for the reader's convenience with a different argument from \cite{LRRS}
and we find a condition on $r$ so that $C_r$ is nonsingular and the automorphism group of $C_r$
is isomorphic to $\Cyc_6$.
\begin{theor}\label{thm:main10}
 {Assume that the characteristic of $K$ is not $2$ nor $3$.
Any non-hyperelliptic curve of genus $3$ with automorphism group containing a cyclic group of order $6$ is isomorphic to $C_r$ for an $r\ne \pm 2$, and 
the automorphism group of $C_r$ for $r\ne \pm 2$ is a cyclic group of order $6$
if and only if $r \ne 0$.}
For $r,r' \notin \{0,\pm2\}$, we have $C_r \cong C_{r'}$ if and only if $r^2 = {r'}^2$. Moreover, if $C_r$ is superspecial, then $r^2 \in \mathbb{F}_{p^2}$.
\end{theor}
\noindent On the other hand, it is known (cf.\ Remark \ref{rem:cyclic} below for details) that non-hyperelliptic curves of genus $3$ whose automorphism group contains $\Cyc_6$ can be represented as
\[
    M_{(t_1,t_2,t_3,t_4)}: y^6 = (x-t_1)(x-t_2)^3(x-t_3)^4(x-t_4)^4,
\]
where all $t_i$'s are distinct elements in $K$. In Subsection \ref{Moonen}, we give an invariant for isomorphism classes of these curves by using Theorem A.

Li, Mantovan, Pries and Tang~\cite[Section 6.1]{LMPT1} determined possible Newton polygons of $M_{(t_1,t_2,t_3,t_4)}$, which implies that the possible $a$-numbers of $C_r$ are also determined (see the beginning of Section \ref{sec:C6} below). As a different way from the prior work, we 
compute the Hasse-Witt matrix of $C_r$ explicitly by using a Gauss' hypergeometric series, which gives another proof of the result on the possible $a$-numbers of $C_r$. Our method has an advantage: We can determine the exact number of isomorphism classes of $C_r$ attaining the possible maximal $a$-number:
\begin{theor}\label{thm:main11}
We assume that $p \geq 5$. Then, we have the following:
\begin{enumerate}
    \item[(1)] (cf.\ \cite[Section 6.1]{LMPT1}) The possible $a$-numbers of non-hyperelliptic curves $C_r$ of genus 3 such that $\Aut(C_r)\simeq {\Cyc}_6$ are as follows:
\begin{enumerate}
\item[(i)] If $p\equiv 1 \pmod 6$, we have $a(C_r) = 0$ or $2$, and
\item[(ii)] If  $p\equiv 5 \pmod 6$, we have $a(C_r) = 1$ or $3$.
\end{enumerate}
\item[(2)] In both the cases (i) and (ii), the exact number of isomorphism classes of $C_r\!$ attaining the maximal $a$-number is $\left\lfloor p/12 \right\rfloor$.
\end{enumerate}
\end{theor}

We remark that the enumeration result in Theorem \ref{thm:main11} for $p\equiv 5 \pmod 6$ with $a$-number $=3$ is found in Brock's dissertation \cite[Theorem 3.15, II(e)]{Brock}, where he used the result by Hashimoto \cite{Hashimoto} on the class numbers of quaternion unitary groups. Therefore, the new result is that we obtain the number of isomorphism classes of $C_r$ such that $a(C_r) = 2$ when $p \equiv 1 \pmod{6}$.

The rest of this paper is organized as follows: In Subsection \ref{Cr}, we show that non-hyperelliptic curves of genus 3 with cyclic automorphism group $\Cyc_6$ are written as the form $C_r$ for $r \neq 0,\pm 2$, and we prove Theorem \ref{thm:main10}. In Subsection \ref{Moonen}, we examine the relationship between $C_r$ and $M_{(t_1,t_2,t_3,t_4)}$. At the beginning of Section 3, we review a part of Li-Mantovan-Pries-Tang's works~\cite{LMPT1} briefly.
After that, we compute explicitly the Hasse-Witt matrices of our curves, dividing into two cases depending on whether $p \equiv 5 \pmod{6}$ or $p \equiv 1 \pmod{6}$. Finally, we give a concluding remark in Section 4.

\section{Non-hyperelliptic genus-3 curves with cyclic automorphism group of order 6}\label{sec:Intro}

As described in Introduction, we study non-hyperelliptic curves of genus $3$ with cyclic automorphism groups of order $6$ in this paper. Let $K$ be an algebraically closed field of characteristic $0$ or $p\ge 5$.
In this section, we give a complete description of defining equations of such curves. In Subsection \ref{Cr} below, we study an explicit defining equation of our curves as plane quartics, and we prove Theorem A. In Subsection 2.2, we study another defining equation of our curves as Moonen's 9th special family, and we examine the correspondence between these two forms. 


\subsection{Plane quartic with cyclic automorphism group of order $6$}\label{Cr}
We start with considering the plane quartic given in Case\,(7) of \cite[Theorem 3.1]{LRRS}:
\begin{equation}\label{DefEq:Cr}
	C_r: x^3 z + y^4 + r y^2 z^2 + z^4 = 0, \quad {\rm for}\ r \in K.
\end{equation}
The automorphism groups of $C_r$ contains $\Cyc_6$ for all $r$. Indeed, for a primitive third root $\zeta$ of unity, the automorphism
\[
    \gamma: C_r \rightarrow C_r\ ;\,(x:y:z)\mapsto (\zeta x: -y: z)
\]
is of order $6$. We need to determine when $C_r$ is non-singular,
when $\Aut(C_r)$ is isomorphic to $\Cyc_6$
and when $C_r$ and $C_{r'}$ for $r,r' \in K$ are isomorphic.
\begin{lem}\label{lem:nonsing}
The curve $C_r$ is non-singular if and only if $r \neq \pm 2$.
\end{lem}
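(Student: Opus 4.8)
The plan is to compute the singular locus of the projective plane curve $C_r$ directly from the defining equation $F(x,y,z) = x^3 z + y^4 + r y^2 z^2 + z^4$ by solving the system $F = F_x = F_y = F_z = 0$. First I would observe that $F_x = 3 x^2 z$, so any singular point satisfies $x = 0$ or $z = 0$. If $z = 0$, then $F = y^4$ forces $y = 0$, and then $F_z = x^3$ forces $x = 0$, which is not a point of $\P^2$; hence every singular point has $z \neq 0$, and we may dehomogenize by setting $z = 1$. From $F_x = 3x^2 = 0$ we get $x = 0$, and the remaining equations become $F = y^4 + r y^2 + 1 = 0$, $F_y = 4 y^3 + 2 r y = 2y(2y^2 + r) = 0$, and $F_z = x^3 + 2 r y^2 + 4 = 2 r y^2 + 4 = 0$ (using $x = 0$).

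Next I would solve this reduced system. The case $y = 0$ gives $F = 1 \neq 0$, so $y \neq 0$ and $2 y^2 + r = 0$, i.e.\ $y^2 = -r/2$. Substituting into $F_z = 2 r y^2 + 4 = 0$ gives $r \cdot (-r/2) \cdot 2 + 4 = 0$ wait, more carefully $2r y^2 + 4 = 2r(-r/2) + 4 = -r^2 + 4 = 0$, so $r^2 = 4$, i.e.\ $r = \pm 2$. One should then check consistency with $F = 0$: with $y^2 = -r/2$ and $r^2 = 4$ we have $y^4 + r y^2 + 1 = (r^2/4) + r(-r/2) + 1 = 1 - 1/2 \cdot r^2 \cdot$ — concretely $r^2/4 - r^2/2 + 1 = -r^2/4 + 1 = -1 + 1 = 0$, so $F = 0$ is automatically satisfied. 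Thus $C_r$ has a singular point (at $(0 : y : 1)$ with $y^2 = -r/2$) precisely when $r = \pm 2$, and is nonsingular otherwise; note also that $p \geq 5$ ensures $2, 3, 4 \neq 0$ in $K$ so none of the coefficient manipulations degenerate.

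I do not expect any serious obstacle here; the computation is short because $F_x = 3x^2 z$ immediately collapses the problem to a one-variable analysis. The only points requiring a little care are: checking the point at infinity $z = 0$ separately (handled above), making sure the characteristic assumption $p \geq 5$ is invoked so that $2$, $3$, $4$ are invertible, and confirming that when $r = \pm 2$ the candidate singular point actually lies on the curve rather than just solving the partial-derivative equations. Optionally, for concreteness one can record that the singular point is an ordinary node or a higher singularity, but this is not needed for the statement. I would write the proof as a direct case analysis following the elimination $x = 0$, $2y^2 + r = 0$, $r^2 = 4$ outlined above.
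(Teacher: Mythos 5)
Your proof is correct and follows essentially the same route as the paper: the Jacobian criterion plus the elimination $x=0$, $y\neq 0$, $2y^2+r z^2=0$, leading to $r^2=4$ (you simply dehomogenize at $z=1$, while the paper argues projectively, and you make explicit the check that the candidate point lies on $C_r$ when $r=\pm 2$).
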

\begin{proof}
Using the Jacobian criterion, the only possible singular points are $(x:y:z) \in \mathbb{P}^2$ satisfying the equation $3x^2z = 2y(2y^2+rz^2) = x^3+2z(2z^2+ry^2) = 0$. From this equation and \eqref{DefEq:Cr}, we have $x=0$, $y\ne 0$ and $z\ne 0$. Then, it follows from $2y^2+rz^2=0$ and $2z^2+ry^2=0$ that $(4-r^2)y^2 = 0$ and $(4-r^2)z^2 = 0$. Hence, there exists a solution without $x=y=z=0$ if and only if $r = \pm 2$.
\end{proof}
From now on, we assume $r \neq \pm 2$.
It follows from
\[
    x^3z + y^4 + ry^2z^2 + z^4 = \Bigl(y^2 + \frac{r}{2}z^2\Bigr)^{\!2} + x^3z - \Bigl(\frac{r^2}{4}-1\Bigr)z^4
\]
that we have a degree-2 morphism
\begin{equation}\label{standard elliptic quotient}
    \rho_r: C_r \to {E_r}, \quad (x:y:z) \mapsto 
    \biggl(-\frac{x}{z},\,\frac{y^2 + \frac{r}{2}z^2}{z^2}\biggr) 
    =: (X,Y),
\end{equation}
where {$E_r$} is the elliptic curve defined as
{
\begin{equation}\label{eq:E_0}
E_r: Y^2 = X^3+(r^2/4 - 1).
\end{equation}
Note that the $j$-invariant of $E_r$ is zero.}
This morphism $\rho_r: C_r\to {E_r}$ is just the quotient of $C_r$ by the involution $\gamma^3$ and it is ramified {at
infinity $O$} and the three points {$(\zeta^i,r/2)$ for $i=0,1,2$.} 
Note that the set of {branch} points
is stable under the action of the order-3 group $\langle \gamma\rangle/\langle \gamma^3\rangle$.

The assertion of the following lemma has been obtained by \cite[Theorem 3.1]{LRRS}.
For the reader's convenience, we will review it, which helps the reader understand the structure of a non-hyperelliptic curve of genus $3$ having $\Cyc_6$ 
as a subgroup of its automorphism group.
\begin{lem}\label{C is C_r}
Let $C$ be a non-hyperelliptic curve of genus $3$ such that a subgroup $G$ of $\Aut(C)$ is isomorphic to $\Cyc_6$. Then $C$ is isomorphic to $C_r$
for some $r \neq \pm 2$.
\end{lem}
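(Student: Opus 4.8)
The plan is to work with the canonical model of $C$. Since $C$ is non-hyperelliptic of genus $3$, the canonical map realizes it as a smooth plane quartic $C=\{F=0\}\subset\P^2$, and $\Aut(C)$ acts linearly, i.e.\ $\Aut(C)\hookrightarrow\PGL_3(K)$. Writing $G=\langle\sigma\rangle\cong\Cyc_6$, I would only use the commuting elements $\iota:=\sigma^3$ (order $2$) and $\tau:=\sigma^2$ (order $3$), and squeeze the shape of $F$ out of the fact that $F$ is semi-invariant under each of them; at the end I recognize this shape as that of $C_r$ and invoke Lemma~\ref{lem:nonsing}.

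First the involution. Since $p\neq2$, after a change of coordinates $\iota=\diag(1,-1,1)$, which fixes the line $L:y=0$ pointwise and the isolated point $P=(0:1:0)$. Riemann--Hurwitz for $C\to C/\langle\iota\rangle$ gives $4=4(g'-1)+\#\mathrm{Fix}(\iota)$, and since $C$ is a plane quartic, $1\le\#(C\cap L)\le4$ by B\'ezout, so $1\le\#\mathrm{Fix}(\iota)=\#(C\cap L)+[P\in C]\le5$; this forces $g'=1$ and $\#\mathrm{Fix}(\iota)=4$. There remain the two possibilities $\#(C\cap L)=4$ with $P\notin C$, and $\#(C\cap L)=3$ with $P\in C$. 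The second I would exclude: expanding $F$ in powers of $y$ and imposing $\iota$-semi-invariance leaves only $F=F_0(x,z)+F_2(x,z)y^2$ or $y\mid F$; the latter is impossible since $C$ is irreducible, and in the former case the homogeneity of $F_0$ and $F_2$ makes $P=(0:1:0)$ a singular point of $C$, contradicting smoothness. Hence $\#(C\cap L)=4$ and $P\notin C$, so after rescaling $y$ we have $F=F_0(x,z)+F_2(x,z)y^2+y^4$ with $F_0$ a binary quartic with four distinct roots.

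Next the order-$3$ element. Since $\tau$ commutes with $\iota$ it preserves $\langle x,z\rangle$ and $\langle y\rangle$, and (as $p\neq3$) it can be diagonalized by a block-diagonal change of coordinates that preserves the shape of $F$ just obtained: $\tau=\diag(\lambda_x,\lambda_y,\lambda_z)$ with cube roots of unity. From $F_0\neq0$ one gets $\lambda_x\neq\lambda_z$ (otherwise $\tau$ would be scalar, hence trivial in $\PGL_3$); so $\tau$ acts on $L\cong\P^1$ by an order-$3$ map with exactly two fixed points, whence an order-$3$ permutation of the four roots of $F_0$ must consist of a $3$-cycle and a fixed point. After possibly swapping $x\leftrightarrow z$ this pins $F_0$, up to scaling, to $x^3z-az^4$ with $a\neq0$; comparing the $\tau$-characters of $x^3z$, $z^4$ and $y^4$ then gives $\lambda_y=\lambda_z\neq\lambda_x$, and feeding this back into the semi-invariance condition for the middle term kills the $x^2$- and $xz$-monomials of $F_2$, so $F_2=\delta z^2$. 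Thus $F=x^3z+y^4+\delta y^2z^2-az^4$, and a diagonal rescaling of $x$ and $z$ together with an overall scalar on $F$ (solvable over the algebraically closed $K$ because $a\neq0$) brings this to $x^3z+y^4+ry^2z^2+z^4$ for a suitable $r$. Therefore $C\cong C_r$, and $r\neq\pm2$ by Lemma~\ref{lem:nonsing} since $C$, hence $C_r$, is smooth.

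I expect the only delicate points to be the Riemann--Hurwitz step together with the exclusion of the tangential configuration via smoothness at the isolated fixed point of $\iota$, and the bookkeeping with the two semi-invariance characters that forces the precise monomial shape of $F$; everything else is routine linear algebra over $K$ using only $p\neq2,3$. I would also note that the argument never uses $\Aut(C)=\Cyc_6$, only the existence of commuting automorphisms of orders $2$ and $3$, which is consistent with the fact (via Theorem~\ref{thm:LRRS}) that one special member of the family $C_r$ carries the larger automorphism group ${\rm G}_{48}$.
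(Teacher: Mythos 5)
Your argument is correct, but it follows a genuinely different route from the paper's. The paper passes to the quotient by the involution: it notes that an involution in $\PGL_3(K)$ fixes a line plus an isolated point, so the quotient map $C \to C/\langle\sigma\rangle$ is a degree-$2$ cover of a genus-one curve ramified at the four collinear fixed points; the residual $\Cyc_3$-action must fix one branch point and move the other three in a single orbit, the quotient with that fixed point as origin is then forced to be $E_0: Y^2=X^3+1$ (it carries an order-$3$ automorphism fixing $O$), and finally the branch data is matched with that of the explicit double cover $C_r\to E_0$ of \eqref{standard elliptic quotient}, giving $a^3=4/(r^2-4)$. You never leave the plane model: you diagonalize the commuting elements of orders $2$ and $3$ and squeeze the monomial shape of the quartic out of semi-invariance, which is a purely linear-algebraic normalization. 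The skeletons coincide (fixed-point count via Riemann--Hurwitz for the involution; your ``$3$-cycle plus a fixed point'' is exactly the paper's orbit analysis on the four branch points), but your version is self-contained at the level of the ternary quartic and treats explicitly the configuration in which the isolated fixed point lies on $C$, which the paper discards tacitly; conversely, the paper's route produces the $\Cyc_6$-elliptic quotient as a by-product, which is then reused in Lemma~\ref{lem:uniqueness} and Proposition~\ref{C6iso}. One small point to tighten: the parenthetical ``otherwise $\tau$ would be scalar'' in your deduction of $\lambda_x\neq\lambda_z$ does not follow from $F_0\neq 0$ alone --- if $\lambda_x=\lambda_z=:\lambda$, semi-invariance of the $F_0$-part only gives the multiplier $c=\lambda^4=\lambda$, and it is the presence of the $y^4$-term (already normalized to be nonzero) that forces $\lambda_y^4=c$, hence $\lambda_y=\lambda$ and $\tau$ scalar, the desired contradiction; with that half-line added the step is complete.
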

\begin{proof}
Note that any automorphism of $C$ extends to an automorphism of ${\mathbb P}^2$, as $C$ is canonical. Any nontrivial involution in $\PGL_3(K)$ has a representative in $\GL_3(K)$
which is conjugate to $\diag(-1,1,1)$ since ${\rm char}(K) \ne 2$.
In particular, any involution $\sigma$ is identical on a certain hyperplane $H$
(and sends a vector $v$ to $-v$). 
Hence, the quotient $C \to E$ by $\langle\sigma\rangle$ is ramified at four points (the intersection of quartic $C$ and the hyperplane $H$), which are distinct (otherwise $C$ is singular).
Then the Hurwitz's formula says that $E$ is of genus one.

Let $\sigma$ be the involution in $G$, and let $E$ be the quotient $C/\langle\sigma\rangle$.
Now $\Cyc_3 \cong G/\langle\sigma\rangle$
acts on $E$ non-trivially and moreover {the set of } the {branch} points of the quotient morphism $C \to E$ is stable under the action of $\Cyc_3 \cong G/\langle\sigma\rangle$.
The action of $\Cyc_3$ on the $4$ {branch} points
has to be non-trivial (otherwise $\Cyc_3$ acts on $E$ trivially), and therefore one is a fixed point and the other 3 points make an orbit of $\Cyc_3$.
Consider $E$ as an elliptic curve with
 the fixed point as the origin $O$; then
$E$ is an elliptic curve of $j$-invariant $0$,
 say $E: Y^2 = X^3 + 1$, and
 $C$ is the double cover of $E$ ramified at $O$ and the three points
 \[
    (X,Y)=(\zeta^ia,b), \quad i=0,1,2
\]
for a certain $(a,b)$ satisfying $b^2=a^3+1$. 
We choose $r$ so that $a^3=4/(r^2-4)$, we identify
$E$ and $E_r$  as in \eqref{eq:E_0}
by the isomorphism
$E \to E_r$ sending $(X,Y)$ to $(X/a,rY/2b)$. 
Replacing $b$ by $-b$ by using the hyperelliptic involution $(X,Y)\mapsto (X,-Y)$ in $\Aut(E_0)$ if necessary,
the two double covers $C\to E_r$ and $C_r \to E_r$
have the same branch points.
Hence $C$ is isomorphic to $C_r$.
\end{proof}

{Let $D$ be the genus-1 curve obtained by forgetting the origin from the elliptic curve $Y^2=X^3+1$ with $j$-invariant $0$.}
In the proof of Lemma \ref{C is C_r}, to a cyclic subgroup $G \subset \Aut(C)$ of order $6$
we associate
a degree-2 morphism {$C \to D$,
which is the quotient by the order-2 subgroup of $G$.}
We call such a morphism of degree 2
up to the action by $\Aut({D})$,
a $\Cyc_6$-{\it elliptic quotient of $C$},
{where the action of $\Aut({D})$
is defined by the composition of $C\to D$ and an automorphism $D\to D$}.

\begin{lem}\label{lem:uniqueness}
Let $C$ be a non-hyperelliptic curve of genus $3$ such that a subgroup $G$ of $\Aut(C)$ is isomorphic to $\Cyc_6$. Then we have a unique $\Cyc_6$-elliptic quotient of $C$.
\end{lem}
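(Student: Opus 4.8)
The plan is to reduce the assertion to a group-theoretic statement about $\Aut(C)$ and then settle the (at most) two cases allowed by the classification. First I would unwind the definition. The $\Cyc_6$-elliptic quotient attached to a subgroup $G\simeq\Cyc_6$ of $\Aut(C)$ is the composite $C\to C/\langle\sigma\rangle\xrightarrow{\ \sim\ }E_0$, where $\sigma$ is the unique involution of $G$ and the isomorphism $C/\langle\sigma\rangle\xrightarrow{\sim}E_0$ exists because, as in the proof of Lemma \ref{C is C_r}, $C/\langle\sigma\rangle$ is a genus-$1$ curve on which $G/\langle\sigma\rangle\simeq\Cyc_3$ acts with a fixed point, so its $j$-invariant is $0$. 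Two such composites, coming from subgroups $G_1,G_2$ with involutions $\sigma_1,\sigma_2$, represent the same $\Cyc_6$-elliptic quotient precisely when they differ by post-composition with some $\psi\in\Aut(E_0)$; since any such $\psi$ preserves fibres, the $\langle\sigma_1\rangle$-orbits and the $\langle\sigma_2\rangle$-orbits on $C$ coincide, and evaluating at a point fixed by neither involution forces $\sigma_1=\sigma_2$. Conversely $\sigma_1=\sigma_2$ clearly gives the same class. Hence the Lemma is equivalent to the claim that \emph{all subgroups of $\Aut(C)$ isomorphic to $\Cyc_6$ contain the same involution}.

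Next I would invoke Theorem \ref{thm:LRRS}. Among the thirteen groups listed there, only $\Cyc_6$ and the group ${\rm G}_{48}$ of case (11) contain an element of order $6$ (for instance ${\rm S}_4$, ${\rm G}_{96}$ and ${\rm G}_{168}$ have all element orders in $\{1,2,3,4\}$, $\{1,2,3,4,8\}$ and $\{1,2,3,4,7\}$ respectively, and the remaining groups contain no order-$6$ element either). By Lemma \ref{C is C_r} we may assume $C=C_r$; if $\Aut(C_r)\simeq\Cyc_6$ there is a single involution and we are done, so the only case left is $\Aut(C_r)\simeq {\rm G}_{48}$.

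It remains to treat $G={\rm G}_{48}$, and this is the heart of the matter. Working in an explicit model of this curve — for instance the affine model $u^{4}=v^{4}-v$ of the case-(11) quartic — I would exhibit $G\subset\PGL_3$ as generated by a cyclic subgroup $\Cyc_{12}$ of diagonal automorphisms together with one further involution $\varphi$, check $|\langle\Cyc_{12},\varphi\rangle|=48$, and verify by a short computation that the unique involution $\sigma_0$ of $\Cyc_{12}$ commutes with $\varphi$; as it also commutes with $\Cyc_{12}$, it is central in $G$. A Sylow count then finishes the argument: since $G$ contains an element of order $12$, the centralizer of a Sylow $3$-subgroup $P$ has order $\ge 12$, and since $G$ has more than two elements of order $3$ the number of Sylow $3$-subgroups is $4$; hence $C_G(P)$ has order $12$ and equals $\Cyc_{12}$ up to conjugacy. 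Any $\Cyc_6\le G$ therefore has the shape $\langle\sigma\rangle\times P$ with $P$ a Sylow $3$-subgroup and $\sigma$ the unique involution of $C_G(P)$, which is a conjugate of $\sigma_0$, hence $\sigma_0$ itself by centrality. So every copy of $\Cyc_6$ in $\Aut(C)$ contains $\sigma_0$, which is what we needed. The main obstacle is precisely this ${\rm G}_{48}$ case: one has to understand the structure of ${\rm G}_{48}$ — in particular its Sylow $3$-normalizer and its central involution — well enough to conclude that all of its order-$6$ subgroups share one involution.
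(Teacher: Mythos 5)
Your proposal is correct, and its skeleton coincides with the paper's: you unwind the definition to see that a $\Cyc_6$-elliptic quotient (taken modulo $\Aut(E_0)$) determines and is determined by the involution $\sigma=h^3$ of the chosen $\Cyc_6$, so uniqueness is equivalent to all order-$6$ subgroups of $\Aut(C)$ sharing a single involution, and you then use Theorem \ref{thm:LRRS} to reduce to $\Aut(C)\simeq\Cyc_6$ (trivial) or ${\rm G}_{48}$; this is exactly the paper's reduction, phrased there as the claim that $\{h^3 \mid h\in\Aut(C),\ \#\langle h\rangle=6\}$ is a singleton. Where you genuinely differ is the ${\rm G}_{48}$ case: the paper disposes of it by a ``tedious computation'' with Meagher--Top's explicit list of the $48$ automorphisms, whereas you argue structurally that the involution $\sigma_0$ of the diagonal $\Cyc_{12}$ is central and that $C_G(P)$ for a Sylow $3$-subgroup $P$ is a conjugate of that $\Cyc_{12}$, so every $\Cyc_6\le G$ equals $\langle\sigma_0\rangle\times P$. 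The group theory is sound (${\rm G}_{48}$ is a central product $\Cyc_4\circ\SL_2(\mathbb{F}_3)$ with seven involutions, only the central one being a cube of an order-$6$ element), and your route is more transparent than brute-force enumeration; note, however, that it does not eliminate curve-specific input: you still must produce the extra involution $\varphi$ with $\langle\Cyc_{12},\varphi\rangle$ of order $48$, check that $\sigma_0$ commutes with $\varphi$, and justify ``more than two elements of order $3$'' (equivalently $n_3=4$), none of which follows from $|G|=48$ together with the existence of an order-$12$ element alone (e.g.\ $\Cyc_3\times{\rm D}_{16}$ has order $48$, an element of order $12$, and a normal Sylow $3$-subgroup). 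So both proofs ultimately defer comparable explicit verifications --- yours through the centrality-plus-Sylow count, the paper's through the tables in \cite{Top} --- and your write-up has the small extra merit of spelling out why the quotient map modulo $\Aut(E_0)$ pins down the involution, a point the paper leaves implicit.
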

\begin{proof}
We have shown the existence.
The uniqueness follows from the classification of automorphism groups
of non-hyperelliptic curve $C$ of genus $3$. The automorphism group $\Aut(C)$
(containing $\Cyc_6$) is isomorphic to $\Cyc_6$ or a group ${\rm G}_{48}$ of order $48$ (cf.\ \cite[Theorem 3.1]{LRRS}).
By a tedious computation, we can check that the set of involutions in
subgroups $G$ with $G \cong \Cyc_6$:
\[
\{h^3 \mid h \in \Aut(C),\ \#\langle h \rangle = 6\}
\]
is a singleton, see \cite[Section 6]{Top} for an explicit description
of ${\rm G}_{48}$.
\end{proof}

{
Let $\alpha \neq 0$ be an element of $K$ such that $\alpha^{-6} = r^2/4-1$.
Let $\iota_r$ be the isomorphism $E_r\simeq D$ over $K$ sending $(X,Y) \to (\alpha^2X,\alpha^3Y)$.}
We give a necessary and sufficient condition that ${\rm Aut}(C_r)$ is the cyclic group of order $6$:
\begin{prop}\label{6or48}
Assume that $r \neq \pm 2$. Then $\Aut(C_r) \cong \Cyc_6$ for $r\ne 0$, and $\Aut(C_0) \cong {\rm G}_{48}$.
\end{prop}
\begin{proof}
Let {$\iota_r\circ\rho_r: C_r \to  E_r\simeq D$} be the $\Cyc_6$-elliptic quotient
as constructed in \eqref{standard elliptic quotient}.
By using Lemma \ref{lem:uniqueness}, 
we have a canonical homomorphism $\Aut(C_r) \to \Aut({D})$.
The kernel is of order $2$, and the image, say $H$, consists of elements of $\Aut({D})$ stabilizing the set of the {branch} points of $\rho_r$. {The set of branch points considered in $E_r$ is $B:=\{O, (1,r/2),(\zeta,r/2),(\zeta^2,r/2)\}$.} 
{
Consider the exact sequence
\[
\begin{CD}
0 @>>> \Aut(E_r) @>\subset>> \Aut(D) @>\varphi>> E_r @>>> 0.
\end{CD}
\]
Here $\Aut(E_r)$ is the automorphism group of $E_r$ as an elliptic curve,
which is considered as a subgroup of $\Aut(D)$ via $\iota_r$,
and $\varphi$ sends $g\in\Aut(D)$ to $\iota_r^{-1}\circ g \circ \iota_r (O)$.
Since $H$ stabilizes the branch points, we have $\varphi(H) \subset B$.
If $r\ne 0$, then $\varphi(H) =\{O\}$, since $\{O\}$
is the unique subgroup of $E_r$ contained in $B$.
If $r=0$, we have $\varphi(H) = B$, since $B$ is the $2$-torsion
subgroup of $E_r$ and any element of $B$ can be lifted to an element of $H$.
As the $j$-invariant of $E_r$ is $0$, we have $\#\Aut(E_r)=6$.
If $r\ne 0$, then $H\cap \Aut(E_r)$ contains $(X,Y) \mapsto (\zeta^iX,Y)$ for $i=0,1,2$ but does not contain the hyperelliptic involution $(X,Y)\mapsto (X,-Y)$, whence $\#(H\cap \Aut(E_r))=3$. If $r=0$,
then any element of $\Aut(E_r)$ stabilizes $B$, whence
$\#(H\cap \Aut(E_r)) = \#\Aut(E_r)=6$. Thus}
the group $H$ is isomorphic to $\Cyc_3$ if $r\ne 0$, and is of order $24$ if $r=0$.
\end{proof}

{To obtain Theorem \ref{thm:main10}, it remains to show its latter part:}
\begin{quote}
For $r,r' \notin \{0,\pm2\}$, we have $C_r \cong C_{r'}$ if and only if $r^2 = {r'}^2$.
Moreover, if $C_r$ is superspecial, then $r^2 \in \mathbb{F}_{p^2}$.
\end{quote}
\begin{proof}
The ``if"-part {of the first statement} is clear: If $r=-r'$, then the map $(x,y,z) \mapsto (x,y,\sqrt{-1}z)$ defines an isomorphism $C_r \rightarrow C_{r'}$.
Let us show the ``only if"-part.
By Lemma \ref{lem:uniqueness}, the isomorphism class of $C_r$ is determined by the {branch} points {of $\iota_r\circ\rho_r: C_r \to E_r \simeq D$, up to $\Aut(D)$}. The {branch} points are
the origin $O$ and the three points on $D: Y^2 = X^3+1$ defined by 
$Y := \alpha^3r/2$ and $X^3 := 4/(r^2-4)$.
{Hence, if $C_r \simeq C_{r'}$, then
we have $r^2={r'}^2$ looking at the third power of the $X$-coordinates of the branch points, since
 $\Aut(D)$ stabilizes $X^3$ (or equivalently $Y^2$).}

Next, we assume that $C_r$ with $r \neq \pm 2$ is superspecial. It is known that any superspecial curve is defined over $\mathbb{F}_{p^2}$, and hence there exists a curve {$C'$} over $\mathbb{F}_{p^2}$ such that $C_r \simeq {C'} \otimes_{\hspace{0.3mm}\mathbb{F}_{p^2}\hspace{-0.7mm}} K$. Let ${C_r}^{\!(\sigma)}$ be the fiber product $C_r\otimes_{K,\sigma} K$, where $\sigma$ is the $p^2$-power map from $K$ to $K$.
Then we have $C_r \cong {C_r}^{\!(\sigma)}$. On the other hand ${C_r}^{\!(\sigma)} \cong C_{\sigma(r)}$ holds clearly, hence we obtain $C_r \cong C_{\sigma(r)}$. Since $r^2$ is an invariant of $C_r$,
we have $r^2 = \sigma(r^2)$. This means that $r^2 \in \mathbb{F}_{p^2}$, so the proof is completed.
\end{proof}
{
\begin{rem}
    The authors learned from a referee that it had been shown in the proof of \cite[Theorem 3.3]{LRRS} that $r^2$ is a complete invariant of $C_r$ over an algebraically closed field.
\end{rem}
}

\subsection{Correspondence to Moonen's 9th special family}\label{Moonen}

We use the same notation as in the previous subsection, and let $r \in K$ with $r\neq \pm 2$.
Since $C_r$ is a $6$-cyclic cover of the projective line $\mathbb{P}^1$ via the quotient map $C_r \to C_r/\langle \gamma \rangle \cong \mathbb{P}^1$, it is isomorphic to
\[
    M_{(t_1,t_2,t_3,t_4)}: y^6 = (x-t_1)(x-t_2)^3(x-t_3)^4(x-t_4)^4 
\]
for some pairwise distinct elements $t_1$, $t_2$, $t_3$, and $t_4$ in $K$, see Remark \ref{rem:cyclic} below where the isomorphism $C_r \cong M_{(t_1,t_2,t_3,t_4)}$ can be proved without constructing any explicit isomorphism.
Conversely, one can check that $M_{(t_1,t_2,t_3,t_4)}$ is a non-hyperelliptic curve of genus $3$ whose automorphism group contains $\Cyc_6$. 
The curve $M_{(t_1,t_2,t_3,t_4)}$ is known as {\it Moonen’s 9th special family}~\cite[Table 1]{Moonen} (this is the reason why we use the notation `$M$' for the curve).

In the following, we construct an explicit isomorphism between $C_r$ and $M_{(t_1,t_2,t_3,t_4)}$, by which we can compute the equation for $C_r$ isomorphic to a given $M_{(t_1,t_2,t_3,t_4)}$, and vise versa.
With this isomorphism, we also obtain analogues of Proposition \ref{6or48} and Theorem A for $M_{(t_1,t_2,t_3,t_4)}$.


\begin{lem}\label{lem:isom}
The curve $M_{(t_1,t_2,t_3,t_4)}$ is birational to the curve defined by the equation
\begin{equation}\label{Cs}
    \widetilde{C}_{s}: Y^3 = (X^2-1)(X^2-s)\ \,{\rm with}\ s := \frac{t_4-t_2}{t_4-t_1} \cdot \frac{t_3-t_1}{t_3-t_2}.
\end{equation}
Note that $s$ is an element of $K$ different from 0 or 1.
\end{lem}
\begin{proof}
First, we claim that the curve $M_{(t_1,t_2,t_3,t_4)}$ is isomorphic to the curve defined by
\[
    D_s: Y^6 = X^3(X-1)^4(X-s)^4.
\]
Indeed, the transformation
\[
    x \mapsto \frac{x-t_2}{x-t_1} \cdot \frac{t_3-t_1}{t_3-t_2}
\]
gives an isomorphism such that $t_1 \mapsto \infty,\ t_2 \mapsto 0,\ t_3 \mapsto 1$ and $t_4 \mapsto s$. There exists a rational map
\[
    \widetilde{C}_{s} \rightarrow D_s\,;\,(X,Y) \mapsto (X^2,XY^2),
\]
whose inverse is given by
\[
    D_s \rightarrow \widetilde{C}_{s}\,;\,(X,Y) \mapsto \biggl(\frac{Y^3}{X(X-1)^2(X-s)^2},\,\frac{Y^2}{X(X-1)(X-s)}\biggr).
\]
Then, the curve $\widetilde{C}_{s}$ is birational to $D_s$, and hence this lemma is true. 
\end{proof}

\begin{prop}\label{prop:isom}
The curve $\widetilde{C}_{s}$ is isomorphic to the curve $C_r$ with $r^2 = s + \frac{1}{s} + 2$.
\end{prop}
\begin{proof}
Let $\hspace{-0.5mm}\sqrt[3]{s}$ and $\hspace{-0.5mm}\sqrt[4]{s}$ be a third root and a fourth root of $s$, and we denote by $\hspace{-0.5mm}\sqrt{s} := (\hspace{-0.6mm}\sqrt[4]{s}\hspace{0.2mm})^2$.
Then, by setting $x := -Y/\hspace{-0.5mm}\sqrt[3]{s},\ y := X/\hspace{-0.5mm}\sqrt[4]{s}$ and $z := 1$, the curve $\widetilde{C}_{s}$ is isomorphic to
\[
    C_{r}: x^3z + y^4 + ry^2z^2 + z^4 = 0\ \,{\rm with}\ \,r := -\frac{s+1}{\sqrt{s}},
\]
as desired.
\end{proof}

By Lemma \ref{lem:isom} and Proposition \ref{prop:isom}, one can easily compute a value of $r$ for which $C_r$ is isomorphic to $M_{(t_1,t_2,t_3,t_4)}$.
Conversely, for a given $r$ with $r\neq   \pm 2$, we may choose three among $t_1$, $t_2$, $t_3$, and $t_4$ arbitrary, and then we can compute the other one such that $C_r\cong M_{(t_1,t_2,t_3,t_4)}$.
For example, when $t_1$, $t_2$, and $t_3$ are chosen, one may set $t_4: = \frac{t_3 (t_2 - t_1) s  - t_1 (t_2 - t_3) }{ (t_2 - t_1) s  - (t_2 - t_3)  }$, where $s$ is taken to be any element with $s^2+(2-r^2)s+1=0$.


Let $M_{(t_1,t_2,t_3,t_4)}$ and $M_{({t'}_{\!1},{t'}_{\!2},{t'}_{\!3},{t'}_{\!4})}$ be curves belonging to Moonen’s 9th special family, and put
\begin{align*}
    A &:= (t_3-t_1)(t_4-t_2) + (t_3-t_2)(t_4-t_1),\\
    B &:= (t_3-t_1)(t_3-t_2)(t_4-t_1)(t_4-t_2),\\
    A' &:= ({t'}_{\!3}-{t'}_{\!1})({t'}_{\!4}-{t'}_{\!2}) + ({t'}_{\!3}-{t'}_{\!2})({t'}_{\!4}-{t'}_{\!1}),\ \text{and}\\
    B' &:= ({t'}_{\!3}-{t'}_{\!1})({t'}_{\!3}-{t'}_{\!2})({t'}_{\!4}-{t'}_{\!1})({t'}_{\!4}-{t'}_{\!2}).
\end{align*}
Note that $B$ and $B'$ are non-zero values since $t_i \neq t_j$ and ${t'}_{\!i} \neq {t'}_{\!j}$ if $i \neq j$.
Furthermore, let $C_r$ and $C_{r'}$ be curves constructed in Lemma \ref{lem:isom} and Proposition \ref{prop:isom} so that they are isomorphic to $M_{(t_1,t_2,t_3,t_4)}$ and $M_{({t'}_{\!1},{t'}_{\!2},{t'}_{\!3},{t'}_{\!4})}$ respectively.
Here, we obtain Corollary \ref{analogue1} and Corollary \ref{analogue2} below which are analogues of Proposition \ref{6or48} and Theorem A.
\begin{cor}\label{analogue1}
With the notation as above, the automorphism group of $M_{(t_1,t_2,t_3,t_4)}$ is isomorphic to $\Cyc_6$ if and only if $A \neq 0$. Otherwise, it is isomorphic to ${\rm G}_{48}$.
\end{cor}
\begin{proof}
Recall from Proposition \ref{6or48} that the automorphism group of $C_r$ is of order 48 if and only if $r = 0$. This is equivalent to $s = -1$ since $r^2 = s+\frac{1}{s}+2$, which is equivalent to
\[
    (t_3-t_1)(t_4-t_2) + (t_3-t_2)(t_4-t_1) = 0 
\]
by using the definition of $s$ in \eqref{Cs}.
Therefore, the statement is true.
\end{proof}
\begin{cor}\label{analogue2}
With the notation as above, we assume that $A,A' \neq 0$.
Then $M_{(t_1,t_2,t_3,t_4)}$ is isomorphic to $ M_{({t'}_{\!1},{t'}_{\!2},{t'}_{\!3},{t'}_{\!4})}$ if and only if $A^2/B = {A'}^2/B'$.
\end{cor}
\begin{proof}
Recall from Theorem A that  $C_r \cong C_{r'}$ if and only if $r^2 = r'^2$.
We have
\begin{align*}
    r^2 = s + \frac{1}{s} + 2 &= \frac{t_4-t_2}{t_4-t_1} \cdot \frac{t_3-t_1}{t_3-t_2} + \frac{t_4-t_1}{t_4-t_2} \cdot \frac{t_3-t_2}{t_3-t_1} + 2\\
    &= \frac{\{(t_3-t_1)(t_4-t_2)+(t_3-t_2)(t_4-t_1)\}^2}{(t_3-t_1)(t_3-t_2)(t_4-t_1)(t_4-t_2)} = \frac{A^2}{B}
\end{align*}
and ${r'}^2 = {A'}^2/B'$ similarly, so that the assertion is true.
\end{proof}

\begin{rem}\label{rem:cyclic}
We can prove that any $C_r$ is isomorphic to {some curve} $M_{(t_1,t_2,t_3,t_4)}$, as follows:
By considering the {\it type} of $C_r$ as a cyclic cover of $\mathbb{P}^1$ and its {\it Nielsen classes} as in \cite{Wangyu} for the case of characteristic zero, a straightforward computation of ramification indices with Hurwitz's formula implies that $C_r$ is isomorphic to a cyclic cover of $\mathbb{P}^1$ of type $(6;1,3,4,4)$ or $(6;1,3,3,5)$.
Here, the latter case is impossible.
Indeed, as in \cite[Remark 4]{WangyuSakai}, one can verify that the quotient map from the curve $y^6 = (x-t_1) (x-t_2)^3 (x - t_3)^3 (x-t_4)^5$ to its quotient by $(x,y) \mapsto (x,-y)$ defines the hyperelliptic structure (namely a morphism to $\mathbb{P}^1$ of degree two), which contradicts that $C_r$ is non-hyperelliptic.

As a method to determine the possible $a$-numbers of a cyclic cover $C$ of $\mathbb{P}^1$, we can apply Elkin's result~\cite{Elkin} on the rank of the Cartier operator $\mathscr{C}$ on the space $H^0 (C,\Omega_{C}^1)$ of regular differentials, which is dual to the Frobenius operator on $H^1(C,\mathcal{O}_C)$.
Indeed, applying his result to $C:=M_{(t_1,t_2,t_3,t_4)}$ {yields} $a(C) \geq 1$ for $p \equiv 5 \pmod{6}$, but could not imply \textit{any more}:
Letting $\zeta$ be a primitive $6$-th root of unity and denoting by $\delta$ the automorphism $(x,y)\to (x,\zeta^{-1}y)$ on $C$, we can decompose the linear space $H^0 (C,\Omega_{C}^1)$ into a direct sum of $\zeta^i$-eigenspaces $D_i$ of the induced automorphism $\delta^{\ast}$ on $H^0 (C,\Omega_{C}^1)$.
The dimension $d_i$ of each $D_i$ is computed as $(d_0,d_1,d_2,d_3,d_4,d_5) = (0,1,0,0,1,1)$, and moreover we have (i) $\mathscr{C}(D_5) \subset D_1$, $\mathscr{C} (D_4) \subset D_2 = \{0 \}$, and $\mathscr{C}(D_1) \subset D_5$, if $p \equiv 5 \pmod{6}$; and (ii) $\mathscr{C} (D_i) \subset D_i$ for any $i$, if $p \equiv 1\pmod{6}$.
Thus it follows from (i) that $\mathrm{rank}(\mathscr{C})\leq 2$, i.e., $a(C) \geq 1$ for $p \equiv 5 \pmod{6}$.
This will be also examined in Lemma \ref{C6HW} below, where we find that many entries of the Hasse-Witt matrix of $C_r$ are zero.
It is required for the complete determination of $a(C)$ to analyze ``the possibly non-zero entries'' (namely $c_1(r), c_2(r)$ in Lemma \ref{C6HW}, and $\tilde{c}_1(r), \tilde{c}_2(r), \tilde{c}_3(r)$ in Lemma \ref{C6HW2}) in the Hasse-Witt matrix, and we will do it in Subsections \ref{subsec:main1} and \ref{subsec:main2} below.

\end{rem}


\if 0
\begin{lem}\label{lem:C6cover}
With notation as above, $C$ is birational to the homogenization of an affine model
\begin{itemize}
    \item[\textrm{\bf (I)}] $y^6 = (x-a_1)^3 (x-a_2)^3 (x-a_3)^1 (x-a_4)^5$, or
    \item[\textrm{\bf (II)}] $y^6 = (x-a_1)^3 (x-a_2)^4 (x-a_3)^4(x-a_4)^1$
\end{itemize}
for some mutually different elements $a_1,a_2,a_3,a_4 \in K$.
\end{lem}

\begin{proof}
Similarly to the proof of Lemma \ref{lem:C9cover}, it suffices from \textcolor{blue}{$C / \mathrm{Aut}(C)\cong \mathbb{P}^1$} that we determine possible types $(n; n_1, \ldots , n_r)$ as of Theorem \ref{thm:cyclic} for the cyclic cover $C \to \mathbb{P}^1$.
Since each $v_i$ divides $\mathrm{deg}(\pi)= 6$, the possible values for $v_i$ in \eqref{eq:Hurwitz} are $2$, $3$ and $6$, and it follows from Proposition \ref{prop:properties} that $n_i = 3$ if $v_i=2$, $n_i = 2,4$ if $v_i = 3$, and $n_i = 1,5$ if $v_i = 6$.
Possible values for $v_i$ and $r$ satisfying \eqref{eq:Hurwitz} are as follows: 
\begin{itemize}
    \item $(v_1,v_2,v_3,v_4)=(2,2,6,6)$, $(2,3,3,6)$ or $(3,3,3,3)$ with $r=4$.
    Among these possibilities, $(3,3,3,3)$ is not possible by $\mathrm{gcd}(n_1,n_2,n_3,n_4) \neq 1$ for any corresponding $(n_1,n_2,n_3,n_4)$.

    \item $(v_1,v_2,v_3,v_4,v_5) = (2,2,2,2,3)$ with $r=5$, but this is not possible since any corresponding $(n_1,n_2,n_3,n_4,n_5)$ does not satisfy $n_1 + n_2 + n_3 + n_4 + n_5 \equiv 0 \pmod{6}$.
\end{itemize}
Thus it suffices to consider the first two cases.
For $(v_1,v_2,v_3,v_4)=(2,2,6,6)$, it follows from $n_1+n_2+n_3+n_4 \equiv 0 \pmod{6}$ and $\mathrm{gcd}(n_1,n_2,n_3,n_4) = 1$ that $(n_1,n_2,n_3,n_4) = (3,3,1,5)$.
Similarly, only $(n_1,n_2,n_3,n_4) = (3,2,2,5)$ or $(3,4,4,1)$ is possible for $(v_1,v_2,v_3,v_4)=(2,3,3,6)$.
Therefore, $C$ is birational to {\bf (I)}, {\bf (II)} or {\bf (II)'} $y^6 = (x-a_1)^3 (x-a_2)^2 (x-a_3)^2 (x-a_4)^5 $, where {\bf (II)} and {\bf (II)'} are isomorphic by Lemma \ref{lem:isom} since $5 \times (3,2,2,5) \equiv (3,4,4,1) \pmod{6}$.
\end{proof}
Note that we may fix three among four elements $a_1, a_2 , a_3, a_4 \in K$ in Lemma \ref{lem:C6cover} to be arbitrary elements in $K$; for example, $a_1=0$, $a_2 = 1$ and $a_3 = -1$ as in Lemma \ref{lem:C9cover}.
\fi
\section{Explicit computation of Hasse-Witt matrices and $a$-numbers of our curves}\label{sec:C6}
Let $K$ be an algebraically closed field of characteristic $p \geq 5$.
In this section, we determine the $a$-numbers of non-hyperelliptic curves of genus $3$ with cyclic automorphism groups of order $6$.
This is not a new result (see below for details), but we give another proof of it. As a new result, at the {end} of this section, we give the exact number of isomorphism classes of such curves attaining the possible maximal $a$-number (Theorem B). Recall from Section \ref{sec:Intro} that our curves are defined by
\[
	C_r: x^3z + y^4 + ry^2z^2 + z^4 = 0, \quad {\rm for}\ r \in K\ {\rm with}\ \,r \neq 0,\pm 2.
\]
From this, unless otherwise noted, we assume $r \neq 0, \pm 2$ until the end of this section, and we define the polynomial $F := x^3z + y^4 + ry^2z^2 + z^4 \in K[x,y,z]$.


\subsection{The possible $a$-numbers of $C_r$ from Li-Mantovan-Pries-Tang’s works}\label{LMPT}

Li-Mantovan-Pries-Tang \cite[Section 6]{LMPT1} determine the possible Newton polygons of Moonen's special families.
As studied in Subsection \ref{Moonen}, Moonen's 9th special family corresponds to our curves, and it implies that the possible Newton polygons (and $p$-ranks) of our curves are determined as follows:
\begin{table}[H]\label{tab:Z_6}
 \centering
  \begin{tabular}{|c|c|c|c|}
   \hline
    & $a$-number & $p$-rank & Newton polygon\\\hline
$p\equiv 1 \pmod 6$ & $0$ & $3$ & $3(1,0)+3(0,1)$\\
\cline{2-4}
& $2$ & $1$ & $(1,0)+2(1,1)+(0,1)$\\
\hline
$p\equiv 5 \pmod 6$ & $1$ & $2$ & $2(1,0)+(1,1)+2(0,1)$\\
\cline{2-4}
& $3$ & $0$ & $3(1,1)$\\
\hline
  \end{tabular}
 \caption{The $a$-number, $p$-rank and Newton polygon of $C_r$}
\end{table}

In fact, we also determine the possible $a$-numbers of $C_r$ as follows: We denote by $f_{C_r}$ the $p$-rank of $C_r$, which is equal to the rank of $H \cdot H^{(p)} \cdots H^{(p^{g-1})}$, where $H^{(p^i)}$ is the matrix with entries equal to the $p^i$-th powers of the entries of the Hasse-Witt matrix $H$ of $C_r$. Here, the Hasse-Witt matrix $H$ of $C_r$ with respect to the ordered basis $\bigl\{\frac{1}{x^2 y z}, \frac{1}{x y^2 z} , \frac{1}{x y z^2} \bigr\}$ for $H^1(C_r,\mathcal{O}_{C_r})$ is given as
\begin{equation}\label{eq:matrix_genus3}
H=
\left(
\begin{array}{ccc}
c_{2p-2,p-1,p-1} & c_{p-2,2p-1,p-1} & c_{p-2,p-1,2p-1} \\
c_{2p-1,p-2,p-1} & c_{p-1,2p-2,p-1} & c_{p-1,p-2,2p-1} \\
c_{2p-1,p-1,p-2} & c_{p-1,2p-1,p-2} & c_{p-1,p-1,2p-2}
\end{array}
\right)
\end{equation}
where $c_{i,j,k}$ denotes the coefficient of $x^i y^j z^k$ in $F^{p-1}$.
Moreover one can check that $H$ is diagonal or anti-diagonal (see also Lemmas \ref{C6HW} and \ref{C6HW2} below). Therefore, we see that the $a$-number of $C_r$ is equal to $3-f_{C_r}$ by the following lemma:
\begin{lem}\label{lem:HWdia}
Assume that the Hasse-Witt matrix $H=(h_{i,j})_{i,j}$ of a genus-$g$ curve $C$ is diagonal, or anti-diagonal with $h_{i,g-i+1} = h_{g-i+1,i}=0$, or $h_{i,g-i+1} \neq 0$ and $h_{g-i+1,i} \neq 0$ for each $1 \leq i \leq g$. Then, we have $\mathrm{rank}(H)=f_C$.
\end{lem}
\begin{proof}
The assertion is clear in the diagonal case, and thus we here consider the anti-diagonal case.
Putting $M = H \cdot H^{(p)} \cdots H^{(p^{g-1})} = (m_{i,j})_{i,j}$, a straightforward computation implies the following:
\begin{itemize}
\item If $g$ is odd, then $M$ is anti-diagonal, and its anti-diagonal components are given by
\[
    m_{i,g-i+1} = (h_{g-i+1,i})^{1+p^2+\cdots +p^{g-1}} (h_{i,g-i+1})^{p+p^3+\cdots+p^{g-2}}
\]
for all $1 \leq i \leq g$.
\item If $g$ is even, then $M$ is diagonal, and its diagonal components are given by
\[
    m_{i,i} = (h_{i,g-i+1})^{1+p^2+\cdots +p^{g-2}} (h_{g-i+1,i})^{p+p^3+\cdots+p^{g-1}}
\]
for all $1 \leq i \leq g$.
\end{itemize}
Thus, the rank of $H$ is equal to that of $M$.
\end{proof}

Summarizing the above discussion, we can complete the whole of Table 1.
Namely, the possible $a$-numbers of $C_r$ are also determined. In addition, Li-Mantovan-Pries-Tang's another result \cite[Corollary 7.2]{LMPT2} implies that there is a smooth $C_r$ attaining the possible maximal $a$-number if $p$ is sufficiently large. If $p \equiv 5 \pmod{6}$, the number of isomorphism classes of such curves are known according to \cite[Theorem 3.15, II(e)]{Brock}, but it is not known if $p \equiv 1 \pmod{6}$.

In the following two subsections, we provide another proof of the result on the possible $a$-numbers of $C_r$ by a different way from their works.
Specifically, we compute the Hasse-Witt matrix of $C_r$ explicitly by using a Gauss' hypergeometric series.
Our method also enables us to compute the exact number of isomorphism classes of $C_r$ attaining the possible maximal $a$-number.
This is done in two different cases; where $p \equiv 5 \pmod{6}$ and where $p \equiv 1 \pmod{6}$.
These two cases are studied respectively in Subsections \ref{subsec:main1} and \ref{subsec:main2} below.
We then obtain Theorem \ref{thm:main11} by summarizing the main results of the two subsections, which are stated in Theorems \ref{thm:main1} and \ref{thm:main2}.

\subsection{Hasse-Witt matrices and $a$-numbers in the case $p \equiv 5 \pmod{6}$}\label{subsec:main1}

Throughout this subsection, assume that $p \equiv 5 \pmod{6}$ unless otherwise noted. Let us start with proving that the Hasse-Witt matrix $H$ of $C_r$ is anti-diagonal.

\begin{lem}\label{C6HW}
The Hasse-Witt matrix $H$ of $C_r$ with respect to the ordered basis $\bigl\{\frac{1}{x^2 y z}, \frac{1}{x y^2 z} , \frac{1}{x y z^2} \bigr\}$ for $H^1(C_r,\mathcal{O}_{C_r})$ is given as follows:
\setcounter{equation}{1}
\begin{equation}\label{HW}
H = \left( \begin{array}{ccc}
	0 & 0 & c_1(r)\\
	0 & 0 & 0 \\
	c_2(r) & 0 & 0 \\ 
	\end{array} \right),
\end{equation}
where $c_1(r)$ and $c_2(r)$ respectively denote the coefficients of $x^{p-2}y^{p-1}z^{2p-1}$ and $x^{2p-1}y^{p-1}z^{p-2}$ in $F^{p-1}$.
\end{lem}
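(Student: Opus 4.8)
The plan is to deduce the shape \eqref{HW} directly from Corollary \ref{prop:HWgenus3} by showing that seven of the nine entries of the Hasse--Witt matrix vanish for elementary combinatorial reasons, the two survivors being exactly the $(1,3)$- and $(3,1)$-entries.

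First I would expand $F^{p-1}$ by the multinomial theorem:
\[
F^{p-1} = \bigl(x^3 z + y^4 + r y^2 z^2 + z^4\bigr)^{p-1} = \sum_{a+b+c+d=p-1} \binom{p-1}{a,b,c,d}\, r^{c}\, x^{3a}\, y^{4b+2c}\, z^{a+2c+4d}.
\]
The key structural observation is that every monomial $x^i y^j z^k$ occurring in $F^{p-1}$ has $i \equiv 0 \pmod{3}$ (since $x$ enters $F$ only through $x^3 z$) and $j \equiv 0 \pmod{2}$ (since $y$ enters $F$ only through $y^4$ and $y^2 z^2$). Hence the coefficient $c_{i,j,k}$ of $x^i y^j z^k$ in $F^{p-1}$ is automatically zero whenever $3 \nmid i$ or $j$ is odd.

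Next I would run through the nine entries of the matrix in Corollary \ref{prop:HWgenus3}, invoking $p \equiv 5 \pmod 6$, i.e., $p \equiv 2 \pmod 3$ and $p$ odd. The $(1,1)$-entry $c_{2p-2,p-1,p-1}$ dies because $2p-2 \equiv 2 \pmod 3$; the four entries in positions $(2,2)$, $(2,3)$, $(3,2)$, $(3,3)$ die because their $x$-exponent is $p-1 \equiv 1 \pmod 3$; the $(1,2)$-entry $c_{p-2,2p-1,p-1}$ dies because its $y$-exponent $2p-1$ is odd; and the $(2,1)$-entry $c_{2p-1,p-2,p-1}$ dies because its $y$-exponent $p-2$ is odd. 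The only entries for which neither obstruction applies are $c_{p-2,p-1,2p-1}$ in position $(1,3)$ and $c_{2p-1,p-1,p-2}$ in position $(3,1)$: indeed $p-2$ and $2p-1$ are both divisible by $3$ (as $p \equiv 2 \pmod 3$) and $p-1$ is even. Denoting these two coefficients $c_1(r)$ and $c_2(r)$ yields \eqref{HW}.

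I do not expect any genuine obstacle in this argument; it is a bookkeeping exercise once the expansion of $F^{p-1}$ is written down. The only point demanding a little attention is tracking the residues modulo $2$ and modulo $3$ of the index strings $2p-2,\,2p-1,\,p-1,\,p-2$ that appear, which is precisely where the hypothesis $p \equiv 5 \pmod 6$ enters. The substantive work --- actually evaluating the surviving coefficients $c_1(r)$ and $c_2(r)$ and deciding when they vanish --- is deferred to the subsequent results of Section \ref{sec:C6}, not to this lemma.
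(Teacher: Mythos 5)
Your proposal is correct and follows essentially the same route as the paper: expand $F^{p-1}$ by the multinomial theorem, note that every monomial has $x$-exponent divisible by $3$ and $y$-exponent even, and use $p\equiv 5 \pmod 6$ to kill all entries of the matrix in Corollary \ref{prop:HWgenus3} except the $(1,3)$- and $(3,1)$-entries. Your entry-by-entry bookkeeping is just a more explicit version of the paper's one-line conclusion.
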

\begin{proof}
Using the multinomial theorem, one can verify that in every monomial of $F^{p-1}$, the exponent of $x$ is a multiple of $3$ and that of $y$ is even. Hence, the coefficients $c_{i,j,k}$ of \eqref{eq:matrix_genus3} are automatically zero, except for the case where $(i,j,k) = (2p-1,p-1,p-2)$ and $(p-2,p-1,2p-1)$.
\end{proof}

In the following, we investigate properties of $c_1(r)$ and $c_2(r)$ as polynomials in $r$, by which we can determine the $a$-number of $C_r$ in $p \equiv 5 \pmod{6}$.
In particular, we shall prove in Proposition \ref{c2divc1} (3) below that $c_1(r)$ is divisible by $c_2(r)$.
In the proof, the formula~\eqref{eq:Gauss} below on {\it Gauss' hypergeometric series plays a key role.
This formula also enables us to obtain certain equalities (\eqref{eq:G1} and \eqref{eq:G2} below) that correlate $c_1(r)$ and $c_2(r)$ with Gauss' hypergeometric series:
The equality \eqref{eq:G2} will be used to prove the separability of $c_2(r)$ in Lemma \ref{c2sing} (3) below, from which we deduce our main results in Theorem \ref{thm:main1} below.
Here, recall that a Gauss' hypergeometric series is defined to be
\[
    G(a,b,c\,|\,t) := \sum_{n=0}^\infty \frac{(a\,;n)(b\,;n)}{(c\,;n)(1\,;n)}t^n, \quad a,b,c \in \mathbb{C},\ -c \notin \mathbb{N},
\]
where $(x\,;n) = \prod_{j=1}^n(x+j-1)$ denotes the Pochhammer symbol.}

{
\begin{prop}\label{Bro}
Let $a$ and $b$ be integers with $0 \leq b \leq a$. Then, we have
\begin{equation}\label{eq:Gauss}
        \sum_{n=0}^\infty \binom{a}{b-n}\binom{a}{n}t^n = \binom{a}{b} G(-a,-b,1+a-b\,|\,t)
\end{equation}
as formal series in $t$.
\end{prop}
\begin{proof}
For all integers $n \geq 0$, the $t^n$-coefficient of right-hand side is
\begin{align*}
    \binom{a}{b}\frac{(-a\,;n)(-b\,;n)}{(1+a-b\,;n)(1\,;n)} &= \binom{a}{b}\frac{(-a\,;n)}{(1\,;n)}\frac{(-b\,;n)}{(1+a-b\,;n)}\\
    &= \binom{a}{b} \cdot (-1)^n\binom{a}{n} \cdot \frac{(-b)(-b+1) \cdots (b+n-1)}{(a-b+1) \cdots (a-b+n)}\\
    & = \binom{a}{n}\binom{a}{b}\frac{b(b-1) \cdots (b-n+1)}{(a-b+1) \cdots (a-b+n)}\\
    & = \binom{a}{n}\frac{a!}{b!(a-b)!}\frac{b!}{(b-n)!}\frac{(a-b)!}{(a-b+n)!}\\
    &= \binom{a}{n}\frac{a!}{(b-n)!(a-b+n)!} = \binom{a}{n}\binom{a}{b-n},
\end{align*}
which is equal to the $t^n$-coefficient of left-hand side.
\end{proof}
}

Here, we regard $c_1(r)$ and $c_2(r)$ as symmetric polynomials in $\alpha$ and $\beta$ via the relations $\alpha + \beta = r$ and $\alpha \beta=1$.
Setting $z=1$, we then have
\[
	F^{p-1} = (x^3 + y^4 + ry^2 + 1)^{p-1} = \sum_{i=0}^{p-1} \binom{p-1}{i} x^{3i}(y^4 + ry^2 + 1)^{p-1-i}.
\]
Hence, we have
\begin{align}\label{d1c1d2c2}
    \begin{split}
    c_1(r) &= \binom{p-1}{(p-2)/3}d_1(r),\\
    c_2(r) &= \binom{p-1}{(2p-1)/3}d_2(r) = \binom{p-1}{(p-2)/3}d_2(r),
    \end{split}
\end{align}

where $d_1(r)$ denotes the coefficient of $y^{p-1}$ in $(y^4+ry^2+1)^{(2p-1)/3}$ and $d_2(r)$ denotes the coefficient of $y^{p-1}$ in $(y^4+ry^2+1)^{(p-2)/3}$.

\begin{prop}\label{c2divc1}
With notation as above, we have the following:
\begin{enumerate}
\item If $p \equiv 5 \pmod{12}$, the polynomial $d_2(r)$ is a polynomial in $r^2$ with a non-zero constant term. Otherwise, that is, if $p \equiv 11 \pmod{12}$, the polynomial $d_2(r)$ can be divided by $r$, and $d_2(r)/r$ is a polynomial in $r^2$ with a non-zero constant term.
\item Putting $\alpha + \beta = r$ and $\alpha \beta=1$, the polynomials $d_1(r)$ and $d_2(r)$ can be written as symmetric polynomials in $\alpha$ and $\beta$ as follows:
\begin{align}
	d_1(r) &= (\alpha\beta)^{(p+1)/6}\sum_{i+j =(p-1)/2}\binom{(2p-1)/3}{i}\binom{(2p-1)/3}{j}\alpha^i\beta^j,\label{d1deg1} \\
	d_2(r) &= \sum_{i+j =(p-5)/6}\binom{(p-2)/3}{i}\binom{(p-2)/3}{j}\alpha^i\beta^j,\label{d2deg}
\end{align}
where $(\alpha \beta)^{-(p+1)/6} d_1(r) $ and $d_2(r)$ are not divided by $\alpha \beta$ as polynomials in $\alpha$ and $\beta$.
Hence, we have $\mathrm{deg}_r (c_1) = \mathrm{deg}_r (d_1) = (p-1)/2$ and $\mathrm{deg}_r (c_2) = \mathrm{deg}_r (d_2) = (p-5)/6$.
\item The polynomial $d_1(r)$ is divided by the polynomial $d_2(r)$ modulo $p$.
\end{enumerate}
\begin{proof}
(1) It follows from the multinomial theorem that
\[
(y^4+ry^2+1)^{(p-2)/3} = 
\sum_{a + b + c = (p-2)/3} \binom{p-1}{a,b,c} r^b y^{4a + 2 b}
\]
whose $y^{p-1}$-coefficient is equal to
\[
d_2(r) = \sum_{\substack{4a + 2b = p-1\\a+b+c=(p-2)/3}}\binom{p-1}{a,b,c} r^{b}.
\]
If $p \equiv 5 \pmod{12}$, then $p-1$ is a multiple of $4$, hence an integer $b$ satisfying $4a + 2b=p-1$ for some integer $a$ is even. Moreover, the constant term of $d_2(r)$ is equal to $\binom{p-1}{a,b,c}$ with $a = (p-1)/4,\,b = 0$ and $c = (p-5)/12$. This is clearly not zero, so statement (1) in the case that $p \equiv 5 \pmod{12}$ is true. If $p \equiv 11 \pmod{12}$, then $p-1$ is not a multiple of $4$, and thus an integer $b$ satisfying $4a + 2b=p-1$ for some integer $a$ is odd, and hence $d_2(r)$ is divided by $r$. Moreover, the constant term of $d_2(r)/r$ is equal to $\binom{p-1}{a,b,c}$ with $a = (p-3)/4,\,b = 1$ and $c = (p-11)/12$, and this is also non-zero.

(2) Setting $y^4 + ry^2 + 1 = (y^2+\alpha)(y^2+\beta)$ with $\alpha,\beta \in K$, we obtain the binomial expansions
\begin{align*}
    &(y^2+\alpha)^{(2p-1)/3}(y^2+\beta)^{(2p-1)/3}\\
    &= \sum_{k=0}^{(4p-2)/3}\sum_{i+j=k}\binom{(2p-1)/3}{i}\binom{(2p-1)/3}{j}\alpha^i\beta^jy^{(8p-4)/3-2k},
\end{align*}
and
\begin{align*}
    &(y^2+\alpha)^{(p-2)/3}(y^2+\beta)^{(p-2)/3}\\
    &= \sum_{k=0}^{(2p-4)/3}\sum_{i+j=k}\binom{(p-2)/3}{i}\binom{(p-2)/3}{j}\alpha^i\beta^jy^{(4p-8)/3-2k}.
\end{align*}
Looking at the $y^{p-1}$-coefficients in the right hand sides of these two equalities, we have
\begin{align}\label{d1deg2}
	d_1(r) &= \sum_{i+j =(5p-1)/6}\binom{(2p-1)/3}{i}\binom{(2p-1)/3}{j}\alpha^i\beta^j,\\
    d_2(r) &= \sum_{i+j =(p-5)/6}\binom{(p-2)/3}{i}\binom{(p-2)/3}{j}\alpha^i\beta^j.\notag
\end{align}
Since $i,j$ with $i+j = (5p-1)/6$ and $i,j \leq (2p-1)/3$ must satisfy $i,j \geq (p+1)/6$, we can rewrite the right hand side of \eqref{d1deg2} as that of \eqref{d1deg1}, as desired.

(3) {Let $G^{(d)}(a,b,c \,|\, t)$ be the truncation of Gauss' hypergeometric series $G(a,b,c \,|\, t)$ to degree $d$ modulo $p$, that is, we define
\[
    G^{(d)}(a,b,c\,|\,t) := \sum_{n=0}^d g_nt^n, \quad g_n := \frac{(a\,;n)(b\,;n)}{(c\,;n)(1\,;n)} \ \ \text{mod}\ p
\]
where $(x\,;n) = \prod_{j=1}^n(x+j-1)$.
It follows from \eqref{eq:Gauss} of Proposition \ref{Bro} that
\begin{align*}
    &\sum_{i+j=(p-1)/2}\binom{(2p-1)/3}{i}\binom{(2p-1)/3}{j}t^i\\
    &= \binom{(2p-1)/3}{(p-1)/2}G^{(p-1)/2}
    \left(\frac{1-2p}{3},\frac{1-p}{2},\frac{p+7}{6}\ \middle|\ t\right)\\
    &\equiv \binom{(2p-1)/3}{(p-1)/2}G^{(p-1)/2}
    (1/3,1/2,7/6 \mid t) \pmod{p}
\end{align*}
and
\begin{align*}
    &\sum_{i+j =(p-5)/6}\binom{(p-2)/3}{i}\binom{(p-2)/3}{j}t^i\\
    &= \binom{(p-2)/3}{(p+1)/6}
    \left(\frac{2-p}{3},\frac{5-p}{6},\frac{p+7}{6}\ \middle|\ t\right)\\
    &\equiv \binom{(p-2)/3}{(p+1)/6}G^{(p-5)/6}(2/3,5/6,7/6 \mid t) \pmod{p}.
\end{align*}
Substituting $t = \alpha/\beta$ and multiplying $\beta^{(p-1)/2}$ or $\beta^{(p-5)/6}$ of both sides, we have
\begin{align*}
    &\sum_{i+j=(p-1)/2}\binom{(2p-1)/3}{i}\binom{(2p-1)/3}{j}\alpha^i\beta^j\\
    &\equiv \beta^{(p-1)/2}\binom{(2p-1)/3}{(p-1)/2}G^{(p-1)/2}
    (1/3,1/2,7/6 \mid \alpha/\beta),
\end{align*}
and
\begin{align*}
    &\sum_{i+j =(p-5)/6}\binom{(p-2)/3}{i}\binom{(p-2)/3}{j}\alpha^i\beta^j\\
    &\equiv \beta^{(p-5)/6}\binom{(p-2)/3}{(p+1)/6}G^{(p-5)/6}(2/3,5/6,7/6 \mid \alpha/\beta).
\end{align*}
Then, it follows from \eqref{d1deg1} and \eqref{d2deg} that
\begin{align}
    \begin{split}
    &\binom{(2p-1)/3}{(p-1)/2}\beta^{(p-1)/2}G^{(p-1)/2}(1/3,1/2,7/6 \mid \alpha/\beta)\\
    &\equiv (\alpha \beta)^{-(p+1)/6} d_1(r) \pmod{p}\label{eq:G1}
    \end{split}
\end{align}
and
\begin{align}
    \begin{split}
    &\binom{(p-2)/3}{(p+1)/6}\beta^{(p-5)/6}G^{(p-5)/6}(5/6,2/3,7/6 \mid \alpha/\beta)\\
    &\equiv d_2(r) \pmod{p}. \label{eq:G2}
    \end{split}
\end{align}

On the other hand, using Euler's transformation formula, we obtain
\begin{equation}\label{Euler}
	G(a,b,c\,|\,t) = (1-t)^{c-a-b}G(c-a,c-b,c\,|\,t).
\end{equation}
Setting $a = (1-2p)/3,\,b = (1-p)/2,\,c = (p+7)/6$ and $t = \alpha/\beta$, we have}
\begin{align}\label{trans}
    \begin{split}
    &G^{(p-1)/2}(1/3,1/2,7/6 \mid \alpha/\beta)\\
    &\equiv (1-\alpha/\beta)^{(p+1)/3}G^{(p-5)/6}(5/6,2/3,7/6 \mid \alpha/\beta) \pmod{p}.
    \end{split}
\end{align}
We remark that $G(1/3,1/2,7/6 \mid t)$ modulo $p$ does not have any $t^n$-term for $(p-1)/2 \leq n \leq p-1$ since {the} values $(1/2\,;n)$ for all integers $(p-1)/2 \leq n \leq p-1$ are divided by $p$. Similarly to this, {the} values $(5/6\,;n)$ for all integers $(p-5)/6 \leq n \leq p-1$ can be divided by $p$, and hence we see that the $t^n$-terms of $G(5/6,2/3,7/6 \mid t)$ modulo $p$ for $(p-5)/6 \leq n \leq p-1$ are all vanishing.
Therefore, by multiplying $\binom{(p-2)/3}{(p+1)/6}\binom{(2p-1)/3}{(p+1)/6}(\alpha\beta)^{(p+1)/6}\beta^{(p-1)/2}$ to both sides of (\ref{trans}), we get the equality
\begin{align}\label{d2divd1}
    \begin{split}
    &\binom{(p-2)/3}{(p+1)/6}d_1(r)\\
    &\equiv \binom{(2p-1)/3}{(p+1)/6}(\alpha\beta)^{(p+1)/6}(\beta-\alpha)^{(p+1)/3}d_2(r) \pmod{p}
    \end{split}
\end{align}
by \eqref{eq:G1} and \eqref{eq:G2}.
Hence $d_1(r)$ is divided by $d_2(r)$ as symmetric polynomials in $\alpha$ and $\beta$.
\end{proof}
\end{prop}


Thanks to Proposition \ref{c2divc1}, we can compute the $a$-number of $C_r$ by determining whether $c_2(r)$ is zero or not.
For this, we prove the following lemma, which collects properties of $c_2(r)$.

\begin{lem}\label{c2sing}
With notation as above, we have the following:
\begin{enumerate}
\item As a polynomial in $\alpha$ and $\beta$, the polynomial $d_2 (r)$ cannot be divided by $\beta-\alpha$.
\item The polynomial $c_2(r)$ does not have roots $r = \pm 2$.
\item The polynomial $c_2(r)$ is separable.
\end{enumerate}
\end{lem}
\begin{proof}
(1) If $\alpha = \beta$, then it follows from \eqref{d2deg} in Proposition \ref{c2divc1} (2) together with Vandermonde's convolution theorem that
\begin{align*}
    d_2(r) &= \sum_{i+j =(p-5)/6}\binom{(p-2)/3}{i}\binom{(p-2)/3}{j}\beta^{(p-5)/6}\\
    &= \beta^{(p-5)/6}\binom{(2p-4)/3}{(p-5)/6} \neq 0.
\end{align*}

(2) Assume that $c_2(r)$ has a root $r=2$ (resp.\ $r = -2$).
Since the curve $x^4 + y^4 + 2y^2z^2 + z^4 = 0$ and the curve $x^4 + y^4 - 2y^2z^2 + z^4 = 0$ are clearly isomorphic, then $c_2(r)$ has another root $r = -2$ (resp.\ $r = 2$). Hence $c_2(r)$ can be divided by $r^2-4 = (\alpha+\beta)^2 - 4\alpha\beta = (\beta-\alpha)^2$, and so does $d_2(r)$. This contradicts the statement (1).

(3) Using \eqref{d1c1d2c2}, it suffices to show the separability of $d_2(r)$.
Similarly to Igusa's proof in \cite{Igusa} for the separability of the Deuring polynomial, we can prove the assertion as follows:
We first claim that 
\[
G(t) := G^{(p-5)/6}(5/6,2/3,7/6 \mid t)
\]
with $t := \alpha/\beta$ is a separable polynomial.
Indeed, it follows from the statement (1) together with \eqref{d2deg} {and \eqref{eq:G2}} that the roots $t$ of $G(t)$ are different from $0$ and $1$. 
Here, by Euler’s hypergeometric differential equation, we have that $G(a,b,c\,|\,t)$ satisfies the differential equation $\mathcal{D}G(a,b,c\,|\,t) = 0$ with
\begin{equation}\label{Gaussdif}
	\mathcal{D} := t(1-t)\frac{d^2}{dt^2} + \bigl(c-(a+b+1)t\bigr)\frac{d}{dt} -ab.
\end{equation}
Assume that $G(t)$ has a multiple root $t_0$, then we have $G(t_0) = {G}'(t_0) = 0$ and ${G}''(t_0) = 0$ by \eqref{Gaussdif}. 
Repeating this inductively, we thus have ${G}^{(n)}(t_0) = 0$ for all $n \geq 0$, where $G^{(n)}(t)$ denotes the $n$-th derivative of $G(t)$ with respect to $t$.
This is a contradiction, and so the roots of $G(t)$ are all simple.

To prove the separability of $d_2(r)$, assume for contradiction that $d_2(r)$ has a multiple root $r_0 \in K$.
Proposition \ref{c2divc1} (1) implies that $r_0 \neq 0$ and $-r_0$ is also a multiple root of $d_2(r)$. Hence $d_2(r)$ has a factor of the form $(r-r_0)^2(r+r_0)^2 = (r^2 - r_0^2)^2$. It follows from $r^2 -4 = (\beta - \alpha)^2$ and $\alpha \beta = 1$ that we obtain 
\begin{align*}
    r^2 - r_0^2 = (r^2 - 4) - (r_0^2 - 4) &= ( \beta - \alpha)^2  - (r_0^2-4) \alpha \beta \\
    &= \beta^2 \biggl\{\biggl(1-\frac{\alpha}{\beta}\biggr)^{\!\!2} - (r_0^2-4) \frac{\alpha}{\beta}\biggr\}.
\end{align*}
Recall from $t := \alpha/\beta$ that $r^2 - r_0^2 = \beta^2 \{(1-t)^2 - t(r_0^2-4)\}$, and hence we obtain
\[
    (r^2-r_0^2)^2 = \beta^4 \{t^2 + (2-r_0)t +1\}^2.
\]
Thus, we have that $G(t)$ is not separable by \eqref{eq:G2}, and hence this is a contradiction.
Therefore, any root $r_0 \in K$ of $d_2(r)$ is simple, as desired.
\end{proof}

Here, we determine when $c_2(r)$ has a root with $r \neq 0,\pm 2$, after which we also state and prove our main theorem on $C_r$ with $\mathrm{Aut}(C_r) \cong \Cyc_6$ in the case where $p \equiv 5 \pmod{6}$:

\begin{prop}\label{c2root}
The polynomial $c_2(r)$ has a root $r \in {K}$ with $r \neq 0,\pm 2$ if and only if $p \geq 17$.
\end{prop}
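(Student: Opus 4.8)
The plan is to count the roots of $c_2(r)$ in $\overline{K}$, keeping track of which of them are $0$ or $\pm 2$. First I would collect three facts already available: by Proposition \ref{c2divc1} (2) the polynomial $c_2(r)$ has degree $(p-5)/6$; by \eqref{d1c1d2c2} it is a nonzero scalar multiple of $d_2(r)$ — indeed $\binom{p-1}{(p-2)/3}\equiv(-1)^{(p-2)/3}\not\equiv 0\pmod p$ — so $c_2$ and $d_2$ have exactly the same roots with the same multiplicities; and by Lemma \ref{c2sing} (3) the polynomial $c_2(r)$ is separable, hence it has precisely $(p-5)/6$ pairwise distinct roots in $\overline{K}$. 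By Lemma \ref{c2sing} (2) none of these roots is $\pm 2$. Therefore the number of roots of $c_2$ lying outside $\{0,\pm 2\}$ equals $(p-5)/6$ minus the multiplicity of $0$ as a root of $c_2$, and it only remains to determine whether (and with what multiplicity) $0$ is a root.

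Here I would split according to the residue of $p$ modulo $12$, which is legitimate since $p\equiv 5\pmod 6$ forces $p\equiv 5$ or $11\pmod{12}$. If $p\equiv 5\pmod{12}$, then Proposition \ref{c2divc1} (1) tells us $d_2(r)$ has nonzero constant term, so $0$ is not a root of $c_2$; thus all $(p-5)/6$ roots avoid $\{0,\pm2\}$, and such a root exists iff $(p-5)/6\ge 1$, i.e.\ iff $p\ge 11$, which for a prime $p\equiv 5\pmod{12}$ is the same as $p\ge 17$. If $p\equiv 11\pmod{12}$, then Proposition \ref{c2divc1} (1) tells us $d_2(r)/r$ has nonzero constant term, so $0$ is a root of $c_2$ of multiplicity exactly $1$; thus the number of roots avoiding $\{0,\pm2\}$ is $(p-5)/6-1$, which is $\ge 1$ iff $p\ge 17$, i.e.\ (for a prime $p\equiv 11\pmod{12}$) iff $p\ge 23$. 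In both cases the condition ``$c_2(r)$ has a root $r\ne 0,\pm 2$'' is thus equivalent to $p\ge 17$, which is the claim. As a consistency check, the only primes $p\equiv 5\pmod 6$ that are excluded are $p=5$, where $c_2$ is a nonzero constant with no root at all, and $p=11$, where $\deg_r c_2=1$ and its unique root is $0$.

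There is no serious obstacle here: the argument is essentially bookkeeping once Proposition \ref{c2divc1} and Lemma \ref{c2sing} are in hand. The only point demanding a little care is checking that, in each of the two congruence subcases, the numerical condition on the number of ``good'' roots collapses to the single clean inequality $p\ge 17$, and confirming that the boundary primes $p=5$ and $p=11$ behave exactly as this inequality predicts.
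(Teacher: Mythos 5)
Your proof is correct, and it takes a mildly but genuinely different route from the paper's. The paper splits the statement into the two directions: for $p<17$ (i.e.\ $p=5,11$) it simply computes $c_2(r)$ explicitly ($c_2=-1$ for $p=5$ and $c_2=-3r$ for $p=11$) to see there is no admissible root, and for $p\ge 17$ it argues that $\deg_r c_2=(p-5)/6\ge 2$ together with separability (Lemma \ref{c2sing} (3)) and the absence of roots at $\pm 2$ (Lemma \ref{c2sing} (2)) forces a root outside $\{0,\pm2\}$, since at most one of the $\ge 2$ distinct roots can be $0$. You instead invoke Proposition \ref{c2divc1} (1) to decide exactly when $r=0$ is a root (never for $p\equiv 5\pmod{12}$, exactly once for $p\equiv 11\pmod{12}$), and then do a uniform count of the roots avoiding $\{0,\pm2\}$ in each congruence class mod $12$; this eliminates the ad hoc computations at $p=5,11$, handles both directions of the equivalence by the same bookkeeping, and in fact reproduces the exact root count $(p-5)/6$ or $(p-5)/6-1$ that the paper only establishes later, inside the proof of Theorem \ref{thm:main1} (2). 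The small price is the extra case split mod $12$ and the (correct) observation that $\binom{p-1}{(p-2)/3}\equiv(-1)^{(p-2)/3}\not\equiv 0\pmod p$, so $c_2$ and $d_2$ share roots; the paper's version is shorter at the boundary primes but less self-contained about why $r=0$ can absorb at most one root. Both arguments rest on the same underlying lemmas, so this is a difference of organization rather than of substance — but yours is arguably the cleaner one for reuse in the enumeration statement.
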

\begin{proof}
For $p < 17$ with $p \equiv 5 \pmod 6$, a straightforward computation shows that
\[
    \left\{
    \begin{array}{l}
        c_2(r) = -1 \quad \hspace{1.7mm}{\rm for}\ p = 5,\\
        c_2(r) = -3r \quad {\rm for}\ p = 11.
    \end{array}
    \right.
\]
Thus if $p = 5$ or $p = 11$, there does not exist $r \neq 0$ such that $c_2(r) = 0$.
For $p \geq 17$, the claim holds from Lemma \ref{c2sing}\,(2)(3) as the degree of $c_2(r)$ is equal to $(p-5)/6 \geq 2$ by Proposition \ref{c2divc1}\,(2). 
\end{proof}

\begin{theorem}\label{thm:main1}
Assume $r \neq 0, \pm 2$, and $p \equiv 5 \pmod{6}$.
Then, we have the following:
\begin{enumerate}
\item The $a$-number of the non-singular curve $C_r$ is equal to $1$ if $p < 17$, and $1$ or $3$ if $p \geq 17$.
\item The number of non-singular curves $C_r$ with $a(C_r) = 3$ is equal to $\lfloor{(p-5)/12}\rfloor$.
\end{enumerate}
\begin{proof}
(1) As the Hasse-Witt matrix of $C_r$ is given by \eqref{HW} in Lemma \ref{C6HW}, it is clear that $a(C_r) \geq 1$.
Assume that $a(C_r) = 2$.
It follows from Proposition \ref{c2divc1} (3) that $c_1(r)$ is divided by $c_2(r)$.
Then, we have $c_1(r) = 0$ and $c_2(r) \neq 0$, and thus $d_1(r)=0$ and $d_2(r) \neq 0$ by \eqref{HW} and \eqref{d1c1d2c2}.
Here, recall {that since we set} $y^4 +ry^2 + 1 = (y^2 +\alpha)(y^2 +\beta)$ {with} $\alpha\beta = 1$, then we have $(\beta-\alpha)^2 = 0$ by using the equality \eqref{d2divd1}.
By the relation $(\beta-\alpha)^2 = (\alpha + \beta)^2 - 4 \alpha \beta = r^2 - 4$, we also have $r = \pm 2$, which contradicts our assumption $r \neq \pm 2$.
The claim follows from Proposition \ref{c2root}.

(2) By Lemma \ref{C6HW} and Proposition \ref{c2divc1}, the curve $C_r$ with $r \neq 0,\pm 2$ has $a$-number $3$ if and only if $r$ is a root of $c_2(r)$ of degree $(p-5)/6$.
It follows from Lemma \ref{c2sing} (2)(3) that the number of different roots such that $r \neq \pm 2$ of $c_2(r)$ is equal to $(p-5)/6$.
Here we claim that $r=0$ is a root of $c_2(r)$ if and only if $p \equiv 11 \pmod{12}$. Indeed recall from \eqref{d1c1d2c2} that $c_2(r)$ is a constant multiple of the polynomial $d_2(r)$, which is defined as the coefficient of $y^{p-1}$ in the polynomial $(y^4+ry^2+1)^{(p-2)/3}$.
One can check that this coefficient is equal to $0$ for $r=0$ if and only if $p \equiv 11 \pmod{12}$.
Therefore, the number of different roots with $r\neq 0,\pm 2$ of $c_2(r)$ is
\[
	\left\{
	\begin{array}{l}
		(p-5)/6, \hspace{6.5mm} \quad p \equiv 5 \hspace{2mm}\pmod{12},\\
		(p-5)/6-1, \quad p \equiv 11 \pmod{12}.
	\end{array}
	\right.
\]
By Theorem \ref{thm:main10}, the number of isomorphism classes is half of this, as desired.
\end{proof}
\end{theorem}

\subsection{Hasse-Witt matrices and $a$-numbers in the case $p \equiv 1 \pmod{6}$}\label{subsec:main2}

Throughout this subsection, assume that $p \equiv 1 \pmod{6}$ unless otherwise noted.
Let us start with proving that the Hasse-Witt matrix $H$ of $C_r$ is diagonal.
\begin{lem}\label{C6HW2}
The Hasse-Witt matrix $H$ of $C_r$ with respect to the ordered basis $\bigl\{\frac{1}{x^2 y z}, \frac{1}{x y^2 z} , \frac{1}{x y z^2} \bigr\}$ for $H^1({C_r},\mathcal{O}_{{C_r}})$ is given as follows:
\begin{equation}\label{HW2}
H=
\left( \begin{array}{ccc}
	\tilde{c}_{1}(r) & 0 & 0\\
	0 & \tilde{c}_{2}(r) & 0 \\
	0 & 0 & \tilde{c}_{3}(r) \\ 
	\end{array} \right),
\end{equation}
where we denote by $\tilde{c}_{1}(r)$, $\tilde{c}_{2}(r)$ and $\tilde{c}_{3}(r)$ the coefficients of $x^{2p-2}y^{p-1}z^{p-1}$, $x^{p-1}y^{2p-2}z^{p-1}$ and $x^{p-1}y^{p-1}z^{2p-2}$ in $F^{p-1}$ respectively.
\end{lem}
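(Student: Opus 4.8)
The plan is to repeat the structural argument from the proof of Lemma \ref{C6HW}, now tracking the divisibility pattern carried by $p-1$ when $p\equiv 1\pmod 6$. Expanding $F^{p-1}=(x^3z+y^4+ry^2z^2+z^4)^{p-1}$ by the multinomial theorem, a typical term has the form $\binom{p-1}{a,b,c,e}\,r^{c}\,x^{3a}\,y^{4b+2c}\,z^{a+2c+4e}$ with $a+b+c+e=p-1$; hence in every monomial of $F^{p-1}$ the exponent of $x$ is divisible by $3$ and the exponent of $y$ is even. So in the Hasse-Witt matrix of Corollary \ref{prop:HWgenus3} the coefficient $c_{i,j,k}$ of $x^iy^jz^k$ in $F^{p-1}$ is automatically zero unless $3\mid i$ and $j$ is even. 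This is exactly the observation used in Lemma \ref{C6HW}; what changes with the residue of $p$ modulo $6$ is only which of the nine candidate exponent triples in \eqref{eq:matrix_genus3} meet both constraints.

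Next I would go through those nine entries under the hypothesis $p\equiv 1\pmod 6$. Then $p-1\equiv 0\pmod 6$, so $p-1$ and $2p-2$ are each divisible by $3$ and even, whereas $p-2\equiv 2\pmod 3$ and $2p-1\equiv 1\pmod 3$ are divisible by neither $2$ nor $3$ (the parity uses only $p\geq 5$). Inspecting \eqref{eq:matrix_genus3}, the three diagonal positions carry exponent pairs $(i,j)\in\{(2p-2,p-1),(p-1,2p-2),(p-1,p-1)\}$, all of which have $i\in\{2p-2,p-1\}$ and $j\in\{p-1,2p-2\}$ and therefore survive; by definition these surviving coefficients are $\tilde c_1(r),\tilde c_2(r),\tilde c_3(r)$. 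Each of the six off-diagonal positions has either $i\in\{p-2,2p-1\}$ (so $3\nmid i$) or $j\in\{p-2,2p-1\}$ (so $j$ odd), and hence contributes $0$. This yields the claimed diagonal form \eqref{HW2}.

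I do not expect any real obstacle: this lemma is a routine bookkeeping step, parallel to Lemma \ref{C6HW}, and the only things to keep straight are the residues of $p-1,2p-2,p-2,2p-1$ modulo $2$ and $3$, all governed by the standing assumptions $p\geq 5$ and $p\equiv 1\pmod 6$. The substantive analysis — identifying $\tilde c_1,\tilde c_2,\tilde c_3$ as explicit polynomials in $r$ and reading off the $a$-number and $p$-rank — is what Proposition \ref{prop:c3'} and the results after it will handle, not this lemma.
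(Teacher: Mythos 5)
Your proposal is correct and matches the paper's approach: the paper proves this lemma by the same argument as Lemma \ref{C6HW} (multinomial expansion forces $3\mid i$ and $2\mid j$ in every monomial of $F^{p-1}$), and under $p\equiv 1\pmod 6$ it is exactly the three diagonal positions of \eqref{eq:matrix_genus3} that satisfy both conditions. You have simply written out explicitly the mod $2$ and mod $3$ bookkeeping that the paper leaves as ``similar to the proof of Lemma \ref{C6HW}.''
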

\begin{proof}
This is proved in a way similar to the proof of Lemma \ref{C6HW}.
\end{proof}

Next, we shall prove results analogous to Proposition \ref{c2divc1} and Lemma \ref{c2sing} (2) in the following.
\begin{prop}\label{prop:c3'}
With notation same as in Lemma \ref{C6HW2}, we have the following:
\begin{enumerate}
\item The polynomial $\tilde{c}_3(r)$ is divided by the polynomial $\tilde{c}_1(r)$ modulo $p$.
\item The roots of the polynomial $\tilde{c}_2(r)$ are only $r = \pm 2$, hence $\tilde{c}_2(r) \neq 0$ for any other $r \in K$.
\end{enumerate}
\end{prop}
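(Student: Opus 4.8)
The plan is to adapt the Gauss--hypergeometric computations used in the proofs of Proposition~\ref{c2divc1}(3) and Lemma~\ref{c2sing}. First I would put $z=1$ and use $F^{p-1}=(x^3+y^4+ry^2+1)^{p-1}=\sum_{i=0}^{p-1}\binom{p-1}{i}x^{3i}(y^4+ry^2+1)^{p-1-i}$, so that, up to the nonzero constants $\binom{p-1}{(2p-2)/3}$ and $\binom{p-1}{(p-1)/3}$ (units modulo $p$ since $p-1<p$), $\tilde c_1(r)$ is the coefficient of $y^{p-1}$ in $(y^4+ry^2+1)^{(p-1)/3}$, $\tilde c_2(r)$ is the coefficient of $y^{2p-2}$ in $(y^4+ry^2+1)^{2(p-1)/3}$, and $\tilde c_3(r)$ is the coefficient of $y^{p-1}$ in $(y^4+ry^2+1)^{2(p-1)/3}$. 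Writing $y^4+ry^2+1=(y^2+\alpha)(y^2+\beta)$ with $\alpha+\beta=r$, $\alpha\beta=1$, expanding $(y^2+\alpha)^m(y^2+\beta)^m$, extracting the relevant coefficient and shifting the summation index by the minimum value forced by $0\le i,j\le m$ (a shift by $(p-1)/6$ is needed only for $\tilde c_3$), one rewrites $\tilde c_1,\tilde c_2,\tilde c_3$ --- up to nonzero constants and powers of $\beta$ --- as truncated hypergeometric series modulo $p$, namely as $G^{(p-1)/6}(1/3,1/6,5/6\,;\alpha/\beta)$, $G^{(p-1)/3}(2/3,1/3,2/3\,;\alpha/\beta)$ and $G^{(p-1)/2}(2/3,1/2,5/6\,;\alpha/\beta)$, just as in \eqref{eq:G1}--\eqref{eq:G2}. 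The key point is that $\tilde c_1$ and $\tilde c_3$ acquire the common third parameter $5/6\equiv(p+5)/6\pmod p$.

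For part~(2), the series attached to $\tilde c_2$ is degenerate: its first and third parameters agree, so $G(2/3,1/3,2/3\,;t)=(1-t)^{-1/3}$. Since $-1/3\equiv(p-1)/3\pmod p$ and $(p-1)/3<p$, the truncation $G^{(p-1)/3}(2/3,1/3,2/3\,;t)$ equals the polynomial $(1-t)^{(p-1)/3}$. Substituting $t=\alpha/\beta$ and using $\alpha\beta=1$ gives $\tilde c_2(r)\equiv\lambda\,\beta^{(p-1)/3}(1-\alpha/\beta)^{(p-1)/3}=\lambda\,(\beta-\alpha)^{(p-1)/3}\pmod p$ for some $\lambda\in\mathbb{F}_p^{\times}$. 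As $6\mid p-1$ the exponent $(p-1)/3$ is even, and $(\beta-\alpha)^2=(\alpha+\beta)^2-4\alpha\beta=r^2-4$, so $\tilde c_2(r)\equiv\lambda\,(r^2-4)^{(p-1)/6}\pmod p$. Hence $\tilde c_2(r)$ is a nonzero scalar multiple of $(r^2-4)^{(p-1)/6}$, whose only roots are $r=\pm2$.

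For part~(1), I would apply Euler's transformation~\eqref{Euler} to $G(2/3,1/2,5/6\,;t)$: here $c-a=1/6$, $c-b=1/3$ and $c-a-b=-1/3$, hence $G(2/3,1/2,5/6\,;t)=(1-t)^{-1/3}G(1/3,1/6,5/6\,;t)$. Reducing modulo $p$ and truncating to degree $(p-1)/2$, exactly in the manner that \eqref{d2divd1} was deduced from \eqref{Euler} (using $(1/6\,;n)\equiv0$ for $n>(p-1)/6$, $(1/2\,;n)\equiv0$ for $n>(p-1)/2$, the fact that $(1-t)^{-1/3}\equiv(1-t)^{(p-1)/3}$ in degrees $<p$, and the degree identity $(p-1)/3+(p-1)/6=(p-1)/2$), one obtains the truncated congruence
\[
G^{(p-1)/2}(2/3,1/2,5/6\,;t)\equiv(1-t)^{(p-1)/3}\,G^{(p-1)/6}(1/3,1/6,5/6\,;t)\pmod p .
\]
Substituting $t=\alpha/\beta$, clearing powers of $\beta$ and using $\beta^2-1=\beta(\beta-\alpha)$, this becomes $\tilde c_3(r)\equiv\mu\,(\beta-\alpha)^{(p-1)/3}\,\tilde c_1(r)=\mu\,(r^2-4)^{(p-1)/6}\,\tilde c_1(r)\pmod p$ for some $\mu\in\mathbb{F}_p^{\times}$. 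Since $(r^2-4)^{(p-1)/6}\in\mathbb{F}_p[r]$, this shows in particular that $\tilde c_1(r)$ divides $\tilde c_3(r)$ modulo $p$; one also reads off $\deg_r\tilde c_1=(p-1)/6$ and $\deg_r\tilde c_3=(p-1)/2$ (the leading coefficient of $\tilde c_1$ being $\propto\binom{(p-1)/3}{(p-1)/6}\not\equiv0\pmod p$, so $\tilde c_1$ is not the zero polynomial).

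The step I expect to be the main obstacle is the bookkeeping of the modulo-$p$ truncations: for each of the three hypergeometric series one must locate the factor of $p$ in the numerator rising factorials $(1/6\,;n)$, $(1/2\,;n)$, $(1/3\,;n)$, check that the denominators $(5/6\,;n)$, $(2/3\,;n)$, $(1\,;n)$ remain $p$-adic units in the ranges in play, and confirm that these vanishing degrees are compatible with $\deg(1-t)^{(p-1)/3}=(p-1)/3$, so that the displayed congruence holds as an identity of truncated polynomials rather than merely of formal power series. This is precisely the content of Lemma~\ref{Gauss} and of the discussion surrounding \eqref{eq:G1}--\eqref{d2divd1} in the case $p\equiv5\pmod6$, and the same lemma --- or a direct variant of it for the parameter triples $(1/3,1/6,5/6)$ and $(2/3,1/3,2/3)$ --- will carry over.
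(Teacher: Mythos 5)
Your proposal is correct and follows essentially the same route as the paper: reduce $\tilde c_1,\tilde c_2,\tilde c_3$ to coefficients of $(y^4+ry^2+1)^m$, rewrite them via $\alpha+\beta=r$, $\alpha\beta=1$ as truncated hypergeometric series in $t=\alpha/\beta$ (the paper's parameters $(1/6,1/3,(p+5)/6)$, $(2/3,1/3,(p+2)/3)$, $(2/3,1/2,(p+5)/6)$ agree with yours mod $p$), and use Euler's transformation with the truncation bookkeeping to get $\tilde d_2\equiv\mathrm{const}\cdot(\beta-\alpha)^{(p-1)/3}$ and $\tilde d_3\equiv\mathrm{const}\cdot(\alpha\beta)^{(p-1)/6}(\beta-\alpha)^{(p-1)/3}\tilde d_1$, which are exactly the paper's key congruences. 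Your shortcut for part (2) via the degenerate identity $G(2/3,1/3,2/3;t)=(1-t)^{-1/3}$ is just the $c=a$ case of the same transformation, so it is not a genuinely different argument.
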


\begin{proof}
{Similarly to} the notations in Subsection \ref{subsec:main1}, we set $\tilde{d}_{1}(r)$, $\tilde{d}_{2}(r)$ and $\tilde{d}_{3}(r)$ as the coefficients of $y^{p-1}$, $y^{2p-2}$ and $y^{p-1}$ respectively, in the polynomials $(y^4+ry^2+1)^{(p-1)/3}$, $(y^4+ry^2+1)^{(2p-2)/3}$ and $(y^4+ry^2+1)^{(2p-2)/3}$.
Then one can confirm that
\begin{align}\label{d1c1d2c2d3c3}
    \begin{split}
    \tilde{c}_{1}(r) &= \binom{p-1}{(p-1)/3}\tilde{d}_{1}(r),\\
    \tilde{c}_{2}(r) &= \binom{p-1}{(p-1)/3}\tilde{d}_{2}(r), \\
    \tilde{c}_{3}(r) &= \binom{p-1}{(p-1)/3}\tilde{d}_{3}(r).
    \end{split}
\end{align}
Setting $y^4 +ry^2 + 1 = (y^2 +\alpha)(y^2 +\beta)$ with $\alpha,\beta \in K$ satisfying $\alpha + \beta = r$ and $\alpha \beta = 1$, we can write $\tilde{d}_1(r)$, $\tilde{d}_2(r)$ and $\tilde{d}_3(r)$ as symmetric polynomials in $\alpha$ and $\beta$;
\begin{align}
	\tilde{d}_{1}(r) &= \sum_{i+j =(p-1)/6}\binom{(p-1)/3}{i}\binom{(p-1)/3}{j}\alpha^i\beta^j,\label{d1deg}\\
	\tilde{d}_{2}(r) &= \sum_{i+j =(p-1)/3}\binom{(2p-2)/3}{i}\binom{(2p-2)/3}{j}\alpha^i\beta^j,\nonumber\\
	\tilde{d}_{3}(r) &= (\alpha\beta)^{(p-1)/6}\sum_{i+j =(p-1)/2}\binom{(2p-2)/3}{i}\binom{(2p-2)/3}{j}\alpha^i\beta^j\nonumber
\end{align}
and moreover
\begin{alignat}{2}
	\binom{(p-1)/3}{(p-1)/6}\beta^{(p-1)/6}G^{(p-1)/6}(1/6,1/3,5/6 \mid \alpha/\beta) &\equiv \tilde{d}_1(r) && \pmod{p},\nonumber\\
	\binom{(2p-2)/3}{(p-1)/3}\beta^{(p-1)/3}G^{(p-1)/3}(2/3,1/3,2/3 \mid\alpha/\beta) &\equiv \tilde{d}_2(r) && \pmod{p},\nonumber
\end{alignat}
\begin{align*}
    &\binom{(2p-2)/3}{(p-1)/2}\beta^{(p-1)/2}G^{(p-1)/2}(2/3,1/2,5/6 \mid \alpha/\beta)\\
    &\equiv (\alpha\beta)^{-(p-1)/6} \tilde{d}_3(r) \pmod{p} \nonumber
\end{align*}
similarly to the proof of Proposition \ref{c2divc1}.
Using (\ref{Euler}), we get the following two equalities:
\begin{align}
    \begin{split}
    \tilde{d}_{2}(r) &\equiv \binom{(2p-2)/3}{(p-1)/3}(\beta-\alpha)^{(p-1)/3}\label{d'2} \pmod{p},
    \end{split}
\end{align}
\begin{align}
    \begin{split}
    &\binom{(p-1)/3}{(p-1)/6}\tilde{d}_{3}(r)\\
    &\equiv \binom{(2p-2)/3}{(p-1)/2}(\alpha\beta)^{(p-1)/6}(\beta-\alpha)^{(p-1)/3}\tilde{d}_{1}(r)\label{d'1divd'3} \pmod{p}.
    \end{split}
\end{align}
Here \eqref{d'2} means that $\tilde{c}_{2}(r)$ does not have any roots $r \neq \pm 2$ since $(\beta-\alpha)^2 = r^2-4$.
Moreover, we see that \eqref{d'1divd'3} means that $\tilde{c}_{3}(r)$ is divided by $\tilde{c}_{1}(r)$.
\end{proof}

We here state and prove the main theorem in this subsection:
\begin{theorem}\label{thm:main2}
Assume $r \neq 0, \pm 2$, and $p \equiv 1 \pmod{6}$.
Then we have the following:
\begin{enumerate}
\item The $a$-number of the non-singular curve $C_r$ is equal to $0$ or $2$.
\item The number of non-singular curves $C_r$ with $a(C_r) = 2$ is equal to $\lfloor{(p-1)/12}\rfloor$.
\end{enumerate}
\begin{proof}
(1) It follows from Proposition \ref{prop:c3'}\,(2), we have $a(C_r) \leq 2$. Here, we suppose that $a(C_r) = 1$, i.e.\ the Hasse-Witt matrix \eqref{HW2} of $C_r$ given in Lemma \ref{C6HW2} has rank $2$.
By Proposition \ref{prop:c3'} (1)(2), the parameter $r \in K$ must satisfy $\tilde{c}_{1}(r) \neq 0,\,\tilde{c}_{2}(r) \neq 0$ and $\tilde{c}_{3}(r) = 0$ and hence $\tilde{d}_{1}(r) \neq 0,\,\tilde{d}_{2}(r) \neq 0$ and $\tilde{d}_{3}(r) = 0$, where $\tilde{d}_{1}(r),\,\tilde{d}_{2}(r)$ and $\tilde{d}_{3}(r)$ are given in the proof of Proposition \ref{prop:c3'}.
Similarly to the proof of Theorem \ref{thm:main1} (1), it follows from \eqref{d'1divd'3} that $r = \pm 2$, a contradiction.

(2) One can verify in a way similar to the proofs of Lemma \ref{c2sing}\,(2)(3) that the zeros of $\tilde{c}_{1}(r)$ are simple and that $\tilde{c}_{1}(r)$ has no root $r = \pm 2$.
Here we claim that the degree of $\tilde{c}_{1}(r)$ is $(p-1)/6$.
Indeed, the degree of $\tilde{c}_{1}(r)$ is equal to that of $\tilde{d}_{1}(r)$ by \eqref{d1c1d2c2d3c3}.
Since the degree of $\tilde{d}_{1}(r)$ in $r$ is equal to that of $\tilde{d}_{1}(r)$ in $\alpha$ and $\beta$ by $\alpha+\beta =r$ and $\alpha\beta = 1$, it is equal to $(p-1)/6$ by using \eqref{d1deg}.
Thus, the number of different roots with $r \neq 0,\pm 2$ of $\tilde{c}_{1}(r)$ is
\[
	\left\{
	\begin{array}{l}
		(p-1)/6, \hspace{6.5mm} \quad p \equiv 1 \pmod{12},\\
		(p-1)/6-1, \quad p \equiv 7 \pmod{12}.
	\end{array}
	\right.
\]
By Theorem \ref{thm:main10}, the number of isomorphism classes is half of this, as desired.
\end{proof}
\end{theorem}
\section{Concluding remarks}\label{conclusion}

In this paper, we focused on non-hyperelliptic curves $C_r$ of genus 3 with cyclic automorphism group of order 6, and gave a necessary and sufficient condition with respect to $r$ and $r'$ such that $C_r \cong C_{r'}$ (the first assertion of Theorem A). We found another proof of the possible $a$-numbers of $C_r$, and we obtained the exact number of isomorphism classes of $C_r$ attaining the possible maximal $a$-number (Theorem B). Moreover, we showed that $r^2$ belongs to $\mathbb{F}_{p^2}\hspace{-0.3mm}$ if $C_r$ is superspecial (the second assertion of Theorem A). More strongly, the following seems to be true:
\begin{expectt}\label{exp}
If $C_r$ is superspecial, then $r \in \mathbb{F}_{p^2}$.
\end{expectt}

\begin{remm}
Any superspecial curve is isomorphic (over $\overline{\mathbb{F}_p}$) to one defined over $\mathbb{F}_{p^2}$.
Hence, the above Expectation implies that $C_r$ itself provides a model over $\mathbb{F}_{p^2}$, if it is superspecial.
\end{remm}
We confirmed that the above Expectation is true for $17 \leq p < 10000$ with help of computer calculation.
More precisely, we checked that any solution $z$ of
\begin{equation}\label{sspeq_hypergeom}
G^{(p-5)/6}(5/6,2/3,7/6 \mid t)=0
\end{equation}
is a square in $\mathbb{F}_{p^2}$ for all $17 \leq p < 10000$ with $p \equiv 5 \pmod 6$ by using Magma \cite{MagmaHP}.
This assertion implies the correctness of Expectation.
Indeed, if $C_r$ is superspecial, then we have $p \equiv 5 \pmod 6$ and $c_2(r)=0$ holds as in Lemma \ref{C6HW}. This is equivalent to \eqref{sspeq_hypergeom} setting $t:=\alpha/\beta$,
where $\alpha+\beta=r$ and $\alpha\beta=1$. 
If $t =\alpha/\beta \in (\mathbb{F}_{p^2}\hspace{-0.3mm})^2$ holds,
then $\alpha/\beta = \alpha^2 = \beta^{-2} \in (\mathbb{F}_{p^2}\hspace{-0.3mm})^2$
and therefore $r=\alpha+\beta\in \mathbb{F}_{p^2}$.

\begin{remm}\label{rem:R}
Regardless of whether the above Expectation is correct or not, we find an explicit model over $\mathbb{F}_{p^2}\hspace{-0.3mm}$ of a superspecial $C_r$, by using the second assertion of Theorem A:\ 
The curve $C_r$ is isomorphic (over $\overline{\mathbb{F}_p}$) to 
\[
    x^3z + a y^4 + a y^2 z^2 + z^4 = 0
\]
via $y \mapsto \sqrt{r} y$ with $a := r^2$. Note that this model is given in \cite[Theorem 3.3]{LRRS} as another model of genus-$3$ non-hyperelliptic curve with cyclic automorphism group of order $6$.
The above Expectation means that we can take $a$ to be a square in $\mathbb{F}_{p^2}$ for $x^3 z + a y^4 + a y^2 z^2 + z^4 = 0$ to be superspecial. The authors also learned the contents of this remark from C.\,Ritzenthaler.
\end{remm}

A theoretical proof of Expectation is a future work.

\subsection*{Acknowledgments}

The authors are grateful to referees for their helpful suggestions. The authors also thank Tomoyoshi Ibukiyama and Christophe Ritzenthaler for their useful discussions on earlier version of this paper.
This work was supported by JSPS Grant-in-Aid for Young Scientists 20K14301, and JSPS Grant-in-Aid for Scientific Research\hspace{.5mm}(C) 21K03159.
This work was also supported by JST\ CREST Grant Number JPMJCR2113, Japan. 

%
%



\begin{thebibliography}{99}
\setlength{\itemsep}{0pt}








\bibitem{Brock} B.\ W.\ Brock:
\textit{Superspecial curves of genera two and three},
Thesis (Ph.D.)-Princeton University, 1993.

\bibitem{MagmaHP}
W.\,Bosma, J.\,Cannon and C.\,Playoust, \textit{The Magma algebra system. I}. The user language, J.\ Symbolic Comput., {\bf 24} (1997), 235–265.











\bibitem{Elkin}
E.\,A.\,Elkin:
{\it The rank of the Cartier operator on cyclic covers of the projective line}, J.\ Algebra, {\bf 327} (2011), 1--12.












\bibitem{H_RIMS20}
S.\ Harashita:
\textit{Supersingular abelian varieties and curves, and their moduli spaces, with a remark on the dimension of the moduli of supersingular curves of genus 4},
RIMS K\^{o}ky\^{u}roku Bessatsu {\bf B90} (2022), 1--16.

\bibitem{Hashimoto}
K.\ Hashimoto:
\textit{Class numbers of positive definite ternary quaternion hermitian
forms}, Proceed.\ Japan Acad.\,{\bf 59},\ \ Ser.\,A, 490--493, 1983.





\bibitem{Igusa}
J.\ Igusa:
\textit{Class number of a definite quaternion with prime discriminant},
Proc.\ Nat.\ Acad.\ Sci. U.S.A.\,{\bf 44}, 312--314, 1958.






















%




\bibitem{LRRS}
R.\ Lercier, C.\ Ritzenthaler, F.\ Rovetta and J.\ Sijsling:
{\it Parametrizing the moduli space of curves and applications to smooth plane quartics over finite fields}, LMS J.\ Comput. Math.\,{\bf 17}, 128--147, 2014. 


\bibitem{LMPT1}
W.\,Li,\ \,E.\ Mantovan, R.\ Pries and Y.\ Tang:
{\it Newton polygons arising from special families of cyclic covers of the projective line.}
Res.\ Number Theory {\bf 5}, Article number:\,12, 2019.

\bibitem{LMPT2}
W.\,Li,\ \,E.\ Mantovan, R.\ Pries and Y.\ Tang:
{\it Newton polygon stratification of the Torelli locus in PEL-type Shimura varieties.}
International Mathematics Research Notices, Vol.\,{\bf 2022}, No.\,9, 6464--6511, 2022.

\bibitem{Top}
S.\ Meagher and J.\ Top:
\textit{Twists of genus three curves over finite fields},
Finite Fields and Their Applications, Volume {\bf 16}, Issue 5, 347--368, 2010.


\bibitem{Moonen}
B.\ Moonen:\ \,
{\it Special subvarieties arising from families of cyclic covers of the projective line}, Doc.\,Math.\,{\bf 15}, 793819, 2010.


\bibitem{Nygaard}
N.\ O.\ Nygaard:
{\it Slopes of powers of Frobenius on crystalline cohomology}.
Ann.\ Sci.\ \'{E}c.\ Norm.\ Sup\'{e}r.\,{\bf 14}\,(4), 369--401, 1981.
















\bibitem{Wangyu}
N.\,Wangyu:
{\it Cyclic covering of the projective line with prime gonality},
Journal of Pure and Applied Algebra,
{\bf 219}, 1704--1710, Issue 5, 2015.

\bibitem{WangyuSakai}
N.\,Wangyu and F.\,Sakai:
{\it Hyperelliptic curves among cyclic coverings of the projective line, II},
Arch.\ Math., {\bf 102}, 113--116, 2014.

\end{thebibliography}
\end{document}